\newtheorem{theorem}{Theorem}
\newtheorem{corollary}[theorem]{Corollary}
\newtheorem{definition}[theorem]{Definition}
\newtheorem{lemma}[theorem]{Lemma}
\newtheorem{proposition}[theorem]{Proposition}
\newtheorem*{mnthm}{Main Theorem}
\newcommand\h{{\mathfrak h}}
\renewcommand\k{{\mathfrak k}}
\newcommand\m{{\mathfrak m}}
\newcommand\n{{\mathfrak n}}
\newcommand\hh{{\hat{\h}}}
\newcommand\hk{{\hat{\k}}}
\newcommand\hm{{\hat{\m}}}
\newcommand\cH{{\mathcal H}}
\newcommand\Aut{\operatorname{Aut}}
\newcommand\Stab{\operatorname{Stab}}
\newcommand\Fix{\operatorname{Fix}}
\renewcommand\paragraph[1]{{\bigskip\noindent{\bf #1}.}}
\newcommand{\cat}{{\upshape CAT(0)}\xspace}
\title{Ping Pong on \cat Cube Complexes}
\begin{document}
\author{Aditi Kar and Michah Sageev}
\begin{abstract} Let $G$ be a group acting properly and essentially on an irreducible, non-Euclidean finite dimensional CAT(0) cube complex $X$ without a global fixed point at infinity. We show that for any finite collection of simultaneously inessential subgroups $\{H_1, \ldots, H_k\}$  in $G$, there exists an element $g$ of infinite order such that $\forall i$, $\langle H_i, g\rangle \cong H_i * \langle g\rangle$. We apply this to show that any group, acting faithfully and geometrically on a non-Euclidean possibly reducible CAT(0) cube complex, has property $P_{naive}$ i.e. given any finite list $\{g_1, \ldots, g_k\}$  of elements from $G$, there exists $g$ of infinite order such that $\forall i$, $\langle g_i, g\rangle \cong \langle g_i \rangle *\langle g\rangle$. This applies in particular to the Burger-Mozes simple groups that arise as lattices in products of trees. The arguments utilize the action of the group on the boundary of \emph{strongly separated ultrafilters} and moreover, allow us to summarize equivalent conditions for the reduced $C^*$-algebra of the group to be simple. 
\end{abstract}

\maketitle

\parindent=0pt
\parskip = 12pt

\section{Introduction}

Felix Klein's Ping Pong Lemma is a widely used criterion for determining if a collection of group elements generate a non-abelian free subgroup and more generally, for constructing subgroups which are non-trivial free products. 
In this paper, we  employ the ping pong lemma in the setting of groups acting on  \cat cube complexes to construct subgroups which split as non-trivial free products, as described below in the Main Theorem. 

An action of a group $G$ on a \cat cube complex $X$ is said to be \emph{essential} if for any given orbit of $G$, there are orbit points which are arbitrarily deep inside any half space of $X$. A collection of groups $G_1, \ldots, G_k$ acting on $X$ are said to be \emph{simultaneously inessential} if there is a half space $\h$ and a vertex $v \in X$ such that $\cup_i G_i(v) \subset \h$. 
A large class of examples of simultaneously inessential subgroups arise when $G$ is Gromov hyperbolic and acts properly cocompactly on $X$ and  the $G_i$'s are a finite collection of  infinite index quasi-convex subgroups of $G$ (see Proposition \ref{QuasiconvexInessential}).  Our goal is the following. 
 
\begin{mnthm} Let $X$ be a finite dimensional, irreducible, non-Euclidean \cat cube complex and let $G$ be a group acting essentially and properly on $X$, without a global fixed point at infinity. Assume further that $G$ has no finite normal subgroup. Let $A_1,\ldots, A_n$ be a collection of simultaneously inessential subgroups of $G$. Then there exists $g\in G$ of infinite order, such that for each $i$, $ \langle g, A_i\rangle \cong \langle g\rangle * A_i.$
\label{main}
\end{mnthm}

If $H$ is a quasi-convex subgroup in a non-elementary hyperbolic group $G$, then Theorem \ref{main} holds and there exists $g \in G$ such that the subgroup generated by $g$ and $H$ is the free product $\langle g\rangle*H$; this was proved by Arzhantseva in \cite{GA}.


The key step in the proof of the Main Theorem that allows us to play ping pong is Proposition \ref{GoodHyperplane} which says that for any collection $A_1,\ldots, A_n$ of simultaneously inessential subgroups, one can find a half space $\h$ in $X$ such that $a\h$ is contained in the complement of $\h$, for all  nontrivial $a \in \cup_i A_i$. 

In the process of proving the Proposition, we construct a new ultrafilter boundary $S(X)$ built out of \emph{strongly separated} ultrafilters of $X$. The strongly separated ultrafilters have nice properties. For example, the median of three strongly separated ultrafilters is a vertex of $X$. We use this to show that the fixed set of every non-trivial element of the group has empty interior on the boundary. We summarize this into the proposition below, which is proved at the end of Section \ref{IrreducibleCase}.

\begin{proposition} \label{bound}
Let $X$ be a non-Euclidean irreducible CAT(0) cube complex $X$. Suppose a group $G$ is acting essentially on $X$ without a global fixed point at infinity. Then, the compact $G$-space $S(X)$ is minimal and strongly proximal and hence, a $G$-boundary. Moreover, if the action of $G$ on $X$ is proper and $G$ has no non-trivial finite normal subgroups, then the action of $G$ on $S(X)$ is topologically free. 
\end{proposition}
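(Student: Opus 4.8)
The plan is to exhibit contracting dynamics on $S(X)$, run the standard boundary argument for minimality and strong proximality, and then treat topological freeness using the median property. The dynamical engine is the existence of \emph{contracting} (rank one) elements: since the action is essential, irreducible, non-Euclidean and has no global fixed point at infinity, the \cat machinery underlying the construction of $S(X)$ produces strongly separated half-spaces, and the double-skewering/flipping lemmas then yield elements $g\in G$ with $g\h\subsetneq\h$ deeply nested. Such a $g$ has attracting and repelling endpoints $g^{+},g^{-}\in S(X)$ and acts with North--South dynamics: for neighbourhoods $O^{+}\ni g^{+}$ and $O^{-}\ni g^{-}$ there is $n_0$ with $g^{n}(S(X)\setminus O^{-})\subset O^{+}$ for $n\ge n_0$. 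The key auxiliary fact I would record is that the endpoints of such elements are \emph{dense} in $S(X)$, which holds because strongly separated pairs of half-spaces are cofinal and skewerable, so every basic open set of $S(X)$ contains an attracting endpoint.

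For minimality, let $C\subseteq S(X)$ be a nonempty closed $G$-invariant set. Since $G$ has no global fixed point, $C$ is not a single point, so I may choose $c\in C$ distinct from the repelling endpoint $g^{-}$ of some contracting element $g$; then $g^{n}c\to g^{+}$, whence $g^{+}\in C$. Thus $C$ contains the attracting endpoint of every contracting element, and by density of these endpoints $C=S(X)$. For strong proximality, fix a probability measure $\mu$ on $S(X)$. As $\mu$ has at most countably many atoms while attracting and repelling endpoints are dense, I can choose a contracting $g$ whose repelling point $g^{-}$ is not an atom of $\mu$. Then $\mu(O^{-})$ can be made arbitrarily small for neighbourhoods $O^{-}\ni g^{-}$, while $g^{n}$ pushes the complement of $O^{-}$ into any neighbourhood of $g^{+}$; letting $n\to\infty$ and shrinking $O^{-}$ gives $g^{n}_{*}\mu\to\delta_{g^{+}}$ in the weak-$*$ topology. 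Hence $\overline{G\mu}$ contains a Dirac mass, so the action is strongly proximal. Being compact, minimal and strongly proximal, $S(X)$ is a $G$-boundary.

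Finally assume the action is proper and $G$ has no nontrivial finite normal subgroup. Let $N$ be the kernel of the action $G\curvearrowright S(X)$. Each element of $N$ fixes $S(X)$ pointwise, hence fixes the median of any strongly separated triple $\xi_1,\xi_2,\xi_3\in S(X)$, which is a vertex $v$ of $X$; therefore $N\le\Stab(v)$, which is finite by properness. Since $N$ is normal, the hypothesis forces $N=1$, so $G$ acts faithfully on $S(X)$. It then remains to show $\Fix(g)$ has empty interior for every $g\ne 1$. Suppose to the contrary that $g$ fixes a basic open set $U$ pointwise. Medians of strongly separated triples inside $U$ are vertices fixed by $g$, and as the triples vary these medians fill an unbounded, strongly separated region deep inside a half-space $\h$; so $g$ fixes arbitrarily deep vertices along a whole boundary direction.

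I expect this last step to be the main obstacle. By itself the median property does \emph{not} force $g=1$: a branch-swapping involution of a tree can fix an entire half-tree pointwise, hence a boundary-open set, while acting nontrivially elsewhere. The remaining hypotheses must therefore be brought in precisely to exclude such configurations. The intended route is to show that a nontrivial $g$ fixing a boundary-open region deeply would, through the contracting dynamics and the rigidity of strongly separated intervals, stabilize a proper subconfiguration at infinity; combined with irreducibility, non-Euclideanness and the absence of a global fixed point at infinity, this either contradicts those assumptions directly or, via the faithfulness just established, forces $g$ into the trivial kernel $N$. Upgrading \textquotedblleft$g$ fixes a boundary-open set\textquotedblright\ to \textquotedblleft$g=1$\textquotedblright\ is the crux of the argument, and it is exactly where essentiality, irreducibility and the no-fixed-point-at-infinity assumption are indispensable.
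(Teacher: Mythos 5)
The decisive gap is the ``moreover'' clause: topological freeness. You concede this yourself (``the crux of the argument''), and the route you gesture at --- a nontrivial $g$ fixing a boundary-open set would ``stabilize a proper subconfiguration at infinity,'' contradicting irreducibility or non-Euclideanness --- is not an argument as stated, and it is not how the obstruction is actually overcome. The paper's mechanism (Lemma \ref{EmptyInterior}) is a conjugation-plus-properness argument. Suppose $a\neq 1$ fixes a halfspace neighborhood $U_\h\cap S(X)$ pointwise. Using essentiality, double skewering and strong separation, choose $g$ with $g\h\subset\h$ and $g\hh$ strongly separated from $\hh$; set $a_n=g^{-n}ag^n$ and $\h_n=g^{-n}\h$, so $\{\h_n^*\}$ is a strongly separated sequence and $a_n$ fixes $U_{\h_n}\cap S(X)$ pointwise. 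The median $v$ of three strongly separated ultrafilters in $U_\h$ is a vertex (the median lemma) lying in every $\h_n$, hence every $a_n$ fixes $v$. Properness of the action on $X$ leaves only finitely many possibilities for the $a_n$, so after passing to a subsequence a single element $b$ fixes $\bigcup_n U_{\h_n}\cap S(X)$ pointwise. Since the kernel of the action on $S(X)$ is a finite normal subgroup, it is trivial, so $\Fix(b)\neq S(X)$; as $\Fix(b)$ is closed, some halfspace neighborhood $U_\k$ misses it, which forces $\hk$ to cross every $\hh_n$ --- contradicting strong separation of the sequence. Note which hypotheses do the work here: properness and the absence of finite normal subgroups, not irreducibility or non-Euclideanness ``directly'' as you predicted (those were already spent producing the strongly separated skewering element). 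Your half-tree-fixing involution is exactly the configuration this argument kills: it cannot commute with the dynamics of conjugation by a strongly-separated skewering element while fixing a vertex under a proper action.

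For the first half of the statement your plan genuinely differs from the paper's and could be made to work, but it is written at assertion level. North--South dynamics of skewering elements on $S(X)$ is true but requires proof (it follows from the uniqueness lemma for strongly separated sequences, the fact that halfspace neighborhoods are clopen, and compactness of $S(X)$); density of attracting endpoints requires proof; and in your strong-proximality step, ``atoms are countable while repelling points are dense'' does not let you dodge the atoms, since a dense set can be countable --- you need uncountably many distinct repelling points (which do exist, via the Facing Triples Lemma, but you must say so). Also ``since $G$ has no global fixed point, $C$ is not a single point'' conflates the visual boundary with the Roller boundary: a $G$-fixed ultrafilter in $S(X)$ is not immediately a fixed point at infinity in the sense of the hypothesis. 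The paper bypasses all of this machinery: minimality and proximality follow in a few lines from the Flipping Lemma (if $\h\notin\alpha$, flip to get $g\h^*\subset\h$, so $g\alpha\in U_\h$), and strong proximality is obtained by showing that halfspace neighborhoods are contractible (again by flipping), rather than by pushing measures to Dirac masses. What your approach buys, if completed, is explicit single-element dynamics on $S(X)$; what the paper's buys is brevity and no unproven dynamical input.
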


When $X$ splits into irreducible direct factors $X_1\times \ldots \times X_n$ and each factor $X_i$ is non-Euclidean then $S(X)$ decomposes as a direct product of the $S(X_i)$ and Proposition \ref{bound} naturally extends to the reducible case. A similar ultrafilter boundary was studied by Fernos in \cite{fernos}.

\subsection*{An Application of the Main Theorem : property $P_{naive}$} We use the Main Theorem to study property $P_{naive}$ for groups acting on \cat cube complexes. 

Property $P_{naive}$ was introduced by Bekka, de la Harpe and Cowling \cite{BCH} to study the ideal structure of group $C^*$-algebras. We give a brief introduction to $P_{naive}$ in section \ref{naive}. 

\begin{definition} A group $G$ has property $P_{naive}$ if for every finite subset $F \subset G$ there exists an element $y \in G$ of infinite order such that given $g \in F$, the subgroup $\langle g, y \rangle$ is canonically isomorphic to the free product $\langle g\rangle * \langle y\rangle$.  
\end{definition}

\begin{corollary}
Suppose a group $G$ is acting properly and cocompactly on a finite dimensional non-Euclidean CAT(0) cube complex. If $G$ has no non-trivial finite normal subgroups then $G$ has property $P_{naive}$.  
\end{corollary}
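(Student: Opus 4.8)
The plan is to derive the Corollary from the Main Theorem by reducing the cocompact, possibly reducible situation to the irreducible hypotheses of Theorem \ref{main}, and then checking that property $P_{naive}$ follows by taking the single generated element to work against all of the $g_i$ simultaneously. First I would write $X = X_1 \times \cdots \times X_m$ as a product of irreducible \cat cube complexes. Since the action is cocompact, the Euclidean factors can be collected into a single Euclidean factor on which $G$ acts, and the hypothesis that $X$ is non-Euclidean guarantees at least one factor $X_j$ is non-Euclidean; passing to a finite-index subgroup if necessary (using that cocompactness makes the permutation action on the factors finite), I would arrange for $G$ to preserve each irreducible factor so that it acts essentially on the non-Euclidean part. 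Properness and cocompactness of the product action pass to the factor actions, and the absence of a global fixed point at infinity on the non-Euclidean factor follows because a cocompact action on an irreducible non-Euclidean factor cannot fix a point at infinity (otherwise one produces an invariant half-flat or horoball contradicting essentiality/cocompactness).

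Next I would verify the remaining hypotheses of the Main Theorem for the action on the non-Euclidean factor. The crucial point is that $G$ has no non-trivial finite normal subgroup, which is assumed; I would note that this condition is inherited by the factor action up to the finite-index passage, and that a finite normal subgroup of the finite-index subgroup can be controlled by standard induction so the conclusion descends back to $G$. Given a finite subset $F = \{g_1, \ldots, g_k\} \subset G$, I would let $A_i = \langle g_i \rangle$ be the cyclic subgroup generated by each element; these are finitely many subgroups, and the heart of the reduction is to show they are \emph{simultaneously inessential} on the chosen factor. Here I would use that each $g_i$ either acts with bounded orbits (finite order or elliptic) or is a rank-one/hyperbolic-type isometry, and in either case a single half space $\h$ can be pushed deep enough so that $\bigcup_i A_i(v) \subset \h$ for an appropriate vertex $v$; the finiteness of $F$ is what makes the simultaneous containment possible.

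With simultaneous inessentiality established, the Main Theorem produces $g \in G$ of infinite order such that $\langle g, A_i \rangle \cong \langle g \rangle * A_i = \langle g \rangle * \langle g_i \rangle$ for every $i$ simultaneously, which is exactly the statement of property $P_{naive}$ for the subset $F$. Since $F$ was an arbitrary finite subset, this establishes $P_{naive}$ for $G$.

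The main obstacle I anticipate is the simultaneous inessentiality of the cyclic subgroups $\langle g_i \rangle$: an individual element of infinite order that acts essentially (hyperbolically) on the factor will have orbits escaping to infinity and hence cannot be trapped in a fixed half space by a naive argument, so I expect the real work is in replacing each problematic $g_i$ by a conjugate, or more precisely in choosing the half space $\h$ relative to the axis geometry so that the orbit stays on one side. The cleanest route is likely to invoke that each cyclic subgroup individually fixes an endpoint in the strongly separated ultrafilter boundary $S(X)$ of Proposition \ref{bound}, and then to exploit the topological freeness and strong proximality of the $G$-action on $S(X)$ to find, for any finite set of such fixed endpoints, a half space avoiding all their ``shadows'' — thereby converting the dynamical statement on $S(X)$ into the combinatorial simultaneous inessentiality needed as input to the Main Theorem.
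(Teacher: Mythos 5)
Your reduction to a single irreducible factor fails at the step ``Properness and cocompactness of the product action pass to the factor actions.'' Cocompactness does pass to factors, but properness does not, and this is fatal. For $G=F_2\times F_2$ acting on a product of two trees, the action of $G$ on one tree factor has infinite kernel; for the Burger--Mozes lattices (the paper's motivating examples), the projection of $G$ to the automorphism group of a single tree factor is non-discrete, hence non-proper. In either case the hypotheses of the Main Theorem are violated for the factor action, so it cannot be invoked there. The obstruction is intrinsic, not technical: an element $g_i$ of your finite set may act elliptically, or even trivially, on the chosen factor $X_j$ while having infinite order in $G$, and then no ping pong played inside $X_j$ can certify $\langle g, g_i\rangle\cong\langle g\rangle *\langle g_i\rangle$ in $G$ (the image of $g_i$ in $\Aut(X_j)$ carries no information about $g_i$). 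Two further slips: the paper's definition of non-Euclidean for a reducible complex requires \emph{all} irreducible factors to be non-Euclidean, not just one (if some factor were a quasi-line, the corollary would be false, since a virtually central element obstructs free products), whereas your argument discards all factors but one; and passing to a finite-index factor-preserving subgroup $H$ is not harmless, because $P_{naive}$ quantifies over finite subsets of $G$ whose elements need not lie in $H$, and you give no mechanism for descending back to $G$.

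The paper's actual argument for the reducible case (Theorem \ref{Products}) never abandons the product. For each $g_i$ it makes two observations: $\langle g_i\rangle$ acts inessentially on \emph{every} irreducible factor (it is cyclic and each factor is non-Euclidean), and it acts properly on \emph{some} factor $X_k$ (otherwise suitable powers of $g_i$ would fix large balls in every factor, contradicting properness of the action on $X$). Proposition \ref{GoodHyperplane}, applied inside such a factor, yields a halfspace $\h_k$ with $a\h_k\subset\h_k^*$ for all nontrivial $a\in\langle g_i\rangle$, and facing halfspaces $\m_k,\n_k\subset\h_k$ are then chosen as in the proof of Theorem \ref{MainTheorem}. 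The substitute for your single-factor reduction is Theorem \ref{SimultaneousDoubleSkewering} (Theorem C of \cite{CapraceSageev2011}, and this is where the cocompactness/lattice hypotheses are consumed): it produces a \emph{single} element $g\in G$ satisfying $g\m_k^*\subset\n_k$ simultaneously in all factors, so that for each $i$ the ping pong between $g$ and $g_i$ can be played in the factor adapted to $g_i$. Your closing idea of using fixed points in $S(X)$ to obtain simultaneous inessentiality speaks only to the irreducible case (Corollary \ref{naivecor}), where the paper is admittedly terse about why finitely many cyclic subgroups are simultaneously inessential; it does nothing to repair the reducible case, which is where the real content of this corollary lies.
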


In the irreducible case, property $P_{naive}$ is a direct consequence of the Main Theorem, as given by Corollary \ref{naivecor}. When the underlying \cat cube complex is reducible, we prove property $P_{naive}$ for lattices in Aut$(X)$, where $X$ is locally finite, co-compact and has no Euclidean factor (see Theorem \ref{Products}). Examples of groups satisfying the hypotheses of Theorem \ref{Products}, which were not known up to now to satisfy $P_{naive}$, are the Burger-Mozes simple groups \cite{BurgerMozes2000}, which arise as lattices in products of trees.

The study of property $P_{naive}$ was initiated by Bekka, Cowling and de la Harpe as a means to establish $C^*$-simplicity of group $C^*$-algebras. Here, we use property $P_{naive}$ from the above Corollary and several previously known results to provide necessary and sufficient conditions for the reduced $C^*$-algebra of a \cat cube complex group to be simple. This last property is commonly referred to as $C^*$-simplicity. 

\begin{corollary}\label{equiv}
The following are equivalent for a group $G$ acting properly and co-compactly on a finite dimensional \cat cube complex $X$.
\begin{enumerate} 
\item $G$ has property $P_{naive}$.
\item $G$ is $C^*$-simple. 
\item Every non-trivial conjugacy class of $G$ is infinite.
\item The amenable radical of $G$ is trivial.
\item The $G$-action is faithful and $X$ is non-Euclidean.
\end{enumerate} 
\end{corollary}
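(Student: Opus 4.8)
The plan is to prove Corollary \ref{equiv} as a cycle of implications, drawing on the Main Theorem together with standard structural results from the literature on $C^*$-simplicity. Before beginning the cycle I would dispose of the easy equivalence of the two ``non-degeneracy'' hypotheses in (5) with the algebraic conditions (3) and (4). The key structural input is the recent dynamical characterization of $C^*$-simplicity due to Breuillard--Kalantar--Kennedy--Ozawa and Kennedy: a discrete group $G$ is $C^*$-simple if and only if its amenable radical is trivial, and moreover $C^*$-simplicity is equivalent to the existence of a topologically free boundary action. This gives the equivalence $(2)\Leftrightarrow(4)$ essentially for free, and it is well known that a group whose every non-trivial conjugacy class is infinite (the i.c.c.\ condition) is closely related to triviality of the amenable radical; I would make the precise link between (3) and (4) using the fact that a non-trivial finite conjugacy class generates a finite normal subgroup, which lies in the amenable radical, combined with the geometric rigidity available for cube complex groups.

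The plan for the main content is as follows. First I would establish $(5)\Rightarrow$ the hypotheses of the Main Theorem (and Proposition \ref{bound}). Faithfulness of the action plus non-Euclideanness, together with properness and cocompactness, allows one to reduce to the irreducible essential setting via the canonical decomposition $X=X_1\times\cdots\times X_n$ and the essential core (Caprace--Sageev), ensuring no global fixed point at infinity and no non-trivial finite normal subgroup (a finite normal subgroup would act trivially on the essential core, contradicting faithfulness after passing to the factors). Given this, Proposition \ref{bound} supplies a topologically free boundary action of $G$ on $S(X)$, which by the Kennedy/BKKO machinery yields $C^*$-simplicity, giving $(5)\Rightarrow(2)$. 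Independently, the Main Theorem directly yields property $P_{naive}$: given a finite list $F$, the cyclic subgroups generated by the elements of $F$ (or a simultaneously inessential enlargement of them) are simultaneously inessential, so the Main Theorem produces an infinite-order $g$ with $\langle g_i,g\rangle\cong\langle g_i\rangle*\langle g\rangle$ for all $i$; this is $(5)\Rightarrow(1)$, as recorded in Corollary \ref{naivecor} and Theorem \ref{Products}.

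To close the cycle I would invoke the known implication, also due to Bekka--Cowling--de la Harpe, that property $P_{naive}$ implies $C^*$-simplicity, giving $(1)\Rightarrow(2)$; and I would observe that $C^*$-simplicity forces the trivial-amenable-radical condition and the i.c.c.\ condition, yielding $(2)\Rightarrow(4)$ and $(2)\Rightarrow(3)$. The remaining arrows $(3)\Rightarrow(5)$ and $(4)\Rightarrow(5)$ are where the cube complex geometry does the real work: I must show that if the action is \emph{not} faithful or $X$ \emph{is} Euclidean, then $G$ has either a non-trivial finite normal subgroup or a non-trivial amenable radical or a finite conjugacy class. If the action is unfaithful, the kernel is a non-trivial normal subgroup acting trivially; if cocompactness forces this kernel to be finite (which it does, since a normal subgroup acting trivially on a cocompact $X$ is the kernel of the action of a cocompact group, hence finite under properness of the quotient action), then (3), (4) and (5) all fail together. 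If $X$ is Euclidean, then $G$ is virtually abelian and hence amenable with non-trivial amenable radical, again negating (3) and (4). Assembling these contrapositives gives $(4)\Rightarrow(5)$ and $(3)\Rightarrow(5)$, completing the cycle $(5)\Rightarrow(1)\Rightarrow(2)\Rightarrow(3),(4)\Rightarrow(5)$.

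I expect the main obstacle to be the careful geometric bookkeeping in the reduction to the essential, irreducible, fixed-point-free setting required to apply Proposition \ref{bound}, and in particular handling the Euclidean and unfaithful cases so that the three negative conditions (non-trivial finite normal subgroup, non-trivial amenable radical, existence of a finite conjugacy class) all align correctly; the $C^*$-algebraic implications themselves are citations rather than new arguments, so the delicate point is ensuring that the hypotheses of each cited theorem are genuinely met after the geometric reductions.
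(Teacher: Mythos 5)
Your overall architecture---a cycle $(5)\Rightarrow(1)\Rightarrow(2)\Rightarrow(3),(4)$ followed by $(3)\Rightarrow(5)$ and $(4)\Rightarrow(5)$ proved contrapositively---is essentially the paper's (the paper runs the chain $P_{naive}\Rightarrow C^*\text{-simple}\Rightarrow \text{icc}\Leftrightarrow\text{no virtually abelian normal subgroups in finite index subgroups}\Leftrightarrow\text{trivial amenable radical}\Rightarrow(5)\Rightarrow P_{naive}$), but the step where you say the geometry ``does the real work'' has a genuine gap. In this paper \emph{non-Euclidean} means that \emph{every} irreducible factor of $X$ fails to be quasi-isometric to a line; the negation of (5) with a faithful action is therefore ``$X$ has a Euclidean factor,'' not ``$X$ is Euclidean.'' Your contrapositive treats only the case where the whole complex is Euclidean, where you conclude $G$ is virtually abelian. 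It says nothing about the mixed case, e.g.\ $G=\mathbb{Z}\times F_2$ acting on $(\text{line})\times(\text{tree})$: there $G$ is far from virtually abelian, and exhibiting a non-trivial amenable radical and a non-trivial finite conjugacy class requires structural input. The paper supplies exactly this in Lemma \ref{euclidean}, via Corollary 2.8 of \cite{NevoSageev2013}: a finite index subgroup $H$ splits as $H_E\times H_P$ with $H_E$ acting geometrically on the Euclidean part, so $H_E$ is an infinite virtually abelian normal subgroup of $H$, which negates (4) by the discussion preceding Lemma \ref{radical}. Negating (3) is harder still: passing from a virtually abelian normal subgroup of a finite-index subgroup to a \emph{finite conjugacy class in $G$ itself} is not automatic, and is precisely the content of Proposition \ref{icc}, where one replaces $K$ by a characteristic free abelian $K'$ and shows the map $\Gamma\to\Aut(K')\cong GL(n,\mathbb{Z})$ has finite image. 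Your outline omits both of these ingredients, so neither $(3)\Rightarrow(5)$ nor $(4)\Rightarrow(5)$ is established.

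Two smaller inaccuracies. First, your ``key structural input''---that a discrete group is $C^*$-simple if and only if its amenable radical is trivial---is false in general: Le Boudec \cite{Adrien}, cited in this very paper, constructed groups with trivial amenable radical that are not $C^*$-simple. The equivalence holds in this setting only through \cite[Theorem 3.1]{BKKO} after verifying that $G$ has countably many amenable subgroups (via the Tits alternative for cube groups), a verification you do not mention; the paper's own proof of Corollary \ref{equiv} avoids this entirely, using only the elementary facts from \cite{BCH} that $P_{naive}$ implies $C^*$-simplicity and that $C^*$-simplicity implies icc and trivial amenable radical. Fortunately this misstated equivalence is redundant in your cycle. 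Second, a non-trivial finite conjugacy class does \emph{not} generate a finite normal subgroup (a central element of infinite order is a counterexample); it generates a finitely generated, normal, virtually abelian subgroup by the centralizer argument in Proposition \ref{icc}, which is still amenable, so your conclusion survives but for the wrong reason.
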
 

Recent research has yielded more sophisticated techniques for establishing $C^*$-simplicity. Kalantar and Kennedy \cite{KK}  have brought in dynamical techniques showing that a group $G$ is $C^*$-simple if and only if there exists a $G$-boundary on which the $G$-action is topologically free. Using Proposition \ref{bound}, we get an application of their Theorem to groups acting properly (not necessarily, co-compactly) on \cat cube complexes (refer to Proposition \ref{bound}) without a global fixed point at infinity.

Kalantar and Kennedy's methods were developed further by Breuillard, Kalantar, Kennedy and Ozawa \cite{BKKO}. Recall \cite[Theorem 3.1]{BKKO} which says that if a discrete group $G$ has countably many amenable subgroups, then $G$ is $C^*$-simple if and only if the amenable radical is trivial. In \cite{SageevWise}, Sageev and Wise showed that groups acting on finite dimensional \cat cube complexes satisfy the Tits Alternative so long as one knows the action is proper and there is a bound on the size of the finite subgroups. Their proof works equally well if the existence of a bound on the size of finite subgroups is replaced by the weaker condition that, every locally finite subgroup is finite. Therefore, if $G$ is acting properly on a finite dimensional CAT(0) cube complex and every locally finite subgroup of $G$ is finite, then the Tits Alternative for $G$ implies that every amenable subgroup is finitely generated virtually abelian. Consequently, if $G$ is countable, then $G$ can have only countably many amenable subgroups. We get the following interesting application of \cite[Theorem 3.1]{BKKO}. 

\begin{proposition}
Let $G$ be a countable discrete group such that every locally finite subgroup is finite. Suppose $G$ acts properly on a finite dimensional CAT(0) cube complex. Then, $G$ is $C^*$-simple if and only if its amenable radical is trivial. 
\end{proposition}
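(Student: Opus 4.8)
The plan is to deduce the statement from Theorem~3.1 of \cite{BKKO}. That theorem guarantees the desired equivalence, namely $C^*$-simplicity versus triviality of the amenable radical, for any discrete group possessing only countably many amenable subgroups. Hence the whole task reduces to verifying, under the stated hypotheses, that $G$ has at most countably many amenable subgroups.

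First I would record that $G$ satisfies the Tits Alternative. The Sageev--Wise theorem \cite{SageevWise} proves this for groups acting properly on a finite dimensional \cat cube complex under a uniform bound on the orders of finite subgroups; as noted above, the same argument goes through when that bound is replaced by the weaker hypothesis, assumed here, that every locally finite subgroup of $G$ is finite. Thus every subgroup of $G$ either contains a nonabelian free subgroup or is virtually abelian.

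Next I would classify the amenable subgroups. If $H \le G$ is amenable then, since a nonabelian free group is non-amenable and amenability is inherited by subgroups, $H$ contains no nonabelian free subgroup; by the Tits Alternative $H$ is therefore virtually abelian. Because the action of $G$ on $X$ is proper and $X$ is finite dimensional, the rank of any free abelian subgroup of $G$ is bounded by $\dim X$, so the finite-index free abelian subgroup of $H$ is finitely generated, whence $H$ itself is finitely generated. Finally, since $G$ is countable it has only countably many finite subsets and hence only countably many finitely generated subgroups; combined with the previous sentence this shows that $G$ has at most countably many amenable subgroups, and the conclusion follows from \cite[Theorem~3.1]{BKKO}.

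The substantive point in this chain is the passage from virtually abelian to finitely generated in the third step: it relies both on the reinterpretation of the Sageev--Wise hypothesis in terms of locally finite subgroups and on the dimension bound for flat (free abelian) subgroups of a proper action on a finite dimensional cube complex. Once amenable subgroups are known to be finitely generated, the counting argument and the appeal to \cite{BKKO} are purely formal.
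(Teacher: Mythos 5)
Your overall strategy is exactly the paper's: reduce to \cite[Theorem 3.1]{BKKO} by showing that $G$ has only countably many amenable subgroups, via the Sageev--Wise Tits Alternative with the bound-on-finite-subgroups hypothesis replaced by the hypothesis that every locally finite subgroup is finite. However, your third step has a genuine gap. You state the Tits Alternative with the conclusion ``virtually abelian'' and then try to upgrade an amenable subgroup $H$ to being finitely generated using the bound $\dim X$ on the rank of free abelian subgroups. That inference fails twice over: a virtually abelian group is only guaranteed a finite-index \emph{abelian} subgroup, not a free abelian one (assuming it has a finite-index free abelian subgroup already presupposes finite generation); and, more seriously, an abelian group all of whose free abelian subgroups have rank at most $d$ need not be finitely generated --- $\mathbb{Q}$, or $\mathbb{Z}[1/2]$, has every free abelian subgroup of rank at most $1$ yet is not finitely generated, and since these groups are torsion-free the hypothesis on locally finite subgroups does not rule them out either. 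The finite generation is the load-bearing point: without it the counting argument collapses, as the paper itself illustrates with an infinite direct sum of finite cyclic groups acting properly on a tree, which has uncountably many amenable subgroups.

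The repair is to quote Sageev--Wise accurately: the conclusion of their Tits Alternative (and of its variant under the weakened hypothesis) is not merely ``virtually abelian'' but ``finitely generated virtually abelian.'' This is what the paper does: an amenable subgroup contains no rank-$2$ free subgroup, hence by the (modified) Sageev--Wise theorem it is finitely generated virtually abelian, and the countability of the set of finitely generated subgroups of the countable group $G$ finishes the argument. Alternatively, one could exclude subgroups such as $\mathbb{Z}[1/2]$ by hand --- for instance, by Haglund's semisimplicity theorem, infinite-order elements acting properly on a finite dimensional \cat cube complex are combinatorially hyperbolic with integer translation length, and translation length is multiplicative under powers, so no nontrivial element can be divisible by arbitrarily large integers --- but some such argument must be supplied; the rank bound alone does not yield finite generation.
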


This generalizes Le Boudec's Proposition 3.2 from \cite{Adrien}, which deals with the case when $X$ is a product of trees. When the locally finite subgroups are not necessarily finite, groups acting properly on finite dimensional \cat cube complexes can have uncountably many amenable subgroups. For instance, one can make a direct sum of infinitely many copies of a finite cyclic group act properly on a tree. 

\subsection*{Acknowledgements} 
We would like to thank Moose, Luna and Shurjo, without whom this paper would have been possible. We would like to thank Emmanuel Breuillard, Pierre de la Harpe and the anonymous referee for their comments and suggestions for improving the paper. 
\section{Preliminaries}

In this section, we collect some relevant notions and results on \cat cube complexes, as well as introducing a few new notions. We refer the reader to \cite{CapraceSageev2011}, \cite{NevoSageev2013} and  \cite{Sageev2014} for details on the relevant background material.  In particular, we will assume familiarity with hyperplanes and halfspaces. We will always assume that $X$ is a finite dimensional \cat cube complex.  
We will use  $\h$ (and other gothic letters) to refer to a halfspace, $\h^*$ to refer to the complementary halfspace and $\hh$ to refer to a hyperplane.

\subsection{Essentiality}

A \cat cube complex is called \emph{essential} if every halfspace $\h$ contains arbitrarily large metric balls. This is the same as saying that every halfspace contains arbitrarily deep points: points arbitrarily far away from its bounding hyperplane. 

If $\Aut(X)$ acts on $X$ without a global fixed point either in $X$ or at infinity (the visual boundary), then $X$ contains an $\Aut(X)$ invariant essential core. Thus, it is reasonable to discuss only essential \cat cube complexes, and we shall assume this from now on. 

An action of a group $G$ on $X$ is said to be an \emph{essential action} if for any given orbit, there are orbit points arbitrarily deep inside every halfspace.  When $X$ is essential and the action is inessential there exists a halfspace $\h$ and a vertex $v$ such that $G(v)\subset \h$. A collection of subgroups  $G_1,...,G_n < \Aut(X)$ are said to be \emph{simultaneously inessential} if there exists halfspace $\h$ and a vertex $v$ in $X$ such that $\cup_i G_i(v)\subset \h$.

A large class of examples of simultaneously inessential subgroups arises in the context of hyperbolic groups. 

\begin{proposition} 
Let $G$ be a hyperbolic group which acts properly, cocompactly and essentially on a \cat cube complex $X$. Let $G_1,\ldots, G_n$ be a finite collection of infinite index quasiconvex subgroups. Then $G_1,\ldots, G_n$ are simultaneously inessential. 
\label{QuasiconvexInessential}
\end{proposition}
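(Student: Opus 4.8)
The plan is to pass to the Gromov boundary and exploit that the limit sets of the $G_i$ cannot cover $\partial X$. By the Milnor--Svarc lemma the proper cocompact action identifies $G$ quasi-isometrically with $X$, so $X$ is $\delta$-hyperbolic and there is a $G$-equivariant homeomorphism $\partial X \cong \partial G$. Under this identification each $G_i$, being quasiconvex, has a well-defined limit set $\Lambda_i := \Lambda(G_i) \subset \partial X$, which is closed and coincides with the accumulation set of any orbit $G_i v_0$. The key input is the classical fact that a quasiconvex subgroup of infinite index has \emph{nowhere dense} limit set (for instance, such a subgroup has strictly smaller critical exponent, so $\Lambda_i$ is null for the Patterson--Sullivan measure, and a closed null set has empty interior because the measure has full support). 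Since $\partial X$ is compact metric, hence Baire, the finite union $\Lambda := \bigcup_i \Lambda_i$ is again closed and nowhere dense, so its complement contains a nonempty open set $U$. If $G$ is elementary the $G_i$ are finite and the $\Lambda_i$ empty, so the conclusion is immediate; thus I assume $G$ non-elementary.

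Fix $\xi \in U$ and a vertex $v_0$, and let $\gamma = [v_0,\xi)$ be a geodesic ray. Since $X$ is essential, $\gamma$ crosses infinitely many hyperplanes; let $\hk_T$ be the one crossed at parameter $t_T \to \infty$, and let $\k_T$ be the halfspace bounded by $\hk_T$ on the $\xi$ side, so that $v_0 \in \k_T^\ast$ and the $\k_T$ form a properly nested decreasing chain. Two standard facts about $\delta$-hyperbolic geometry drive the argument. First, the boundary shadows $\partial_\infty \k_T$ form a neighbourhood basis of $\xi$: any point of $\k_T$ is separated from $v_0$ by the uniformly quasiconvex set $\hk_T$, which forces a lower bound on its Gromov product with $\xi$ growing with $t_T$, so $\partial_\infty \k_T$ shrinks into $U$ once $T$ is large. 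Second, $\bigcap_T \k_T = \emptyset$, since any vertex is separated from $v_0$ by only finitely many hyperplanes and hence lies in only finitely many $\k_T$.

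For all sufficiently large $T$, so that $\partial_\infty \k_T \subset U$, I claim each intersection $G_i v_0 \cap \k_T$ is finite. Indeed, were it infinite it would accumulate, by compactness, at some $\eta \in \partial X$; such $\eta$ lies in $\Lambda_i$ (being a limit of the orbit) and in $\overline{\k_T}$, hence in $\partial_\infty \k_T \subset U$, contradicting $U \cap \Lambda = \emptyset$. Now let $T$ increase through this range: the sets $G_i v_0 \cap \k_T$ are finite, decreasing, and have empty intersection (as $\bigcap_T \k_T = \emptyset$), so each is eventually empty; taking $T$ beyond the finitely many thresholds arising from $i = 1,\dots,n$ yields a single $T$ with $G_i v_0 \cap \k_T = \emptyset$ for every $i$. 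Setting $\h := \k_T^\ast$ then gives $\bigcup_i G_i v_0 \subset \h$, which is precisely simultaneous inessentiality with halfspace $\h$ and vertex $v_0$.

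The main obstacle is the geometric dictionary of the second paragraph—relating the combinatorics of deep halfspaces to shadows on $\partial X$—which is where hyperbolicity enters essentially, so this argument is specific to the hyperbolic rather than the general \cat setting. The other genuinely needed ingredient is the nowhere-density of infinite-index quasiconvex limit sets; without empty interior one could only conclude that the union $\Lambda$ is proper, which would not suffice to locate the open set $U$ and hence the separating direction $\xi$.
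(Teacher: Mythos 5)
Your strategy is genuinely different from the paper's and is viable in outline, but as written it contains a concrete false step. The halfspaces $\k_T$ crossed by the ray $\gamma$ do \emph{not} in general form a nested decreasing chain: two hyperplanes crossed by a single geodesic may cross each other (in a hyperbolic square complex a ray can run diagonally through a square, crossing both of its hyperplanes), and hyperbolicity together with finite dimension only bounds the size of pairwise-crossing families, it does not eliminate them. This nesting is load-bearing exactly where you say the finite sets $G_i v_0 \cap \k_T$ are ``decreasing \ldots so each is eventually empty'': without monotonicity, a sequence of finite sets with empty total intersection need not be eventually empty (each $\k_T$ could contain a single stray orbit point). A second, related imprecision: separation of a point $y\in\k_T$ from $v_0$ by the convex set $\hk_T$ forces the Gromov product $(y,\xi)_{v_0}$ to be large only in terms of $d(v_0,\hk_T)$, not in terms of the crossing parameter $t_T$; a hyperplane crossed far along $\gamma$ can still pass close to $v_0$, in which case its shadow is enormous. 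Both defects are repairable inside your framework: properness plus cocompactness make $X$ uniformly locally finite, so only finitely many hyperplanes meet any ball and hence $d(v_0,\hk_T)\to\infty$; and one can either extract an infinite nested subchain of the crossed hyperplanes (Dilworth's theorem, since antichains have size at most $\dim X$), or dispense with nesting altogether by noting that if every $\k_T$ contained an orbit point $p_T$, the bound $(p_T,\xi)_{v_0}\geq d(v_0,\k_T)-O(\delta)$ would force $p_T\to\xi$, putting $\xi$ in some $\Lambda_i$ and contradicting $\xi\in U$.

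For comparison, the paper avoids the boundary entirely: it argues by induction on $n$ using Proposition 3.3 of Sageev--Wise (\emph{Cores for quasiconvex actions}), which produces, for any vertex $w$ far from the orbit $H(v)$ of a quasiconvex subgroup, a hyperplane separating $w$ from $H(v)$ within a uniformly bounded distance of $w$; Lemma \ref{StronglySeparated} (strong separation) is then used to make the hyperplane obtained for $G_n$ compatible with the halfspace already containing $G_1(v),\ldots,G_{n-1}(v)$. Your route instead needs the nowhere-density of limit sets of infinite-index quasiconvex subgroups. That fact is true, but your justification is heavier than advertised: the strict critical-exponent gap is a theorem of Dahmani--Futer--Wise rather than classical. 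A more elementary argument: if $\Lambda_i$ had interior, minimality of the $G$-action on $\partial X$ and compactness would cover $\partial X$ by finitely many translates $g\Lambda_i$, and quasiconvexity (rays to points of $g\Lambda_i$ lie uniformly close to the corresponding coset) would exhibit $G$ as a finite union of cosets of conjugates of $G_i$, contradicting B.~H.~Neumann's lemma on covering groups by cosets of infinite-index subgroups.
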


We delay the proof of Proposition \ref{QuasiconvexInessential} until Section \ref{IrreducibleCase}.

%

\subsection{Products}

We say that $X$ is reducible if it admits a decomposition as a product of two non-trivial \cat cube complexes. A finite dimensional \cat cube complex always admits a canonical decomposition as a product of irreducible complexes. 

If $X$ is essential then each irreducible factor of $X$ is also essential. Those irreducible factors that are not quasi-isometric to a real line are called \emph{non-Euclidean} factors. More explicitly, an irreducible, essential \cat cube complex is called non-Euclidean if it is not quasi-isometric to a real line. A (possibly reducible) essential \cat cube complex is called non-Euclidean if all of its factors are non-Euclidean. Essential,  irreducible, non-Euclidean complexes will be the subject of Section \ref{IrreducibleCase}. 

\subsection{Facing triples and strongly separated hyperplanes} The notion of a non-Euclidean \cat cube complex can be characterized in terms of facing triples of hyperplanes.  By a facing triple of hyperplanes we mean a pairwise disjoint triple of hyperplanes that bound halfspaces which are also pairwise disjoint. Equivalently, no hyperplane of the triple separates the other two from one another. We then have the following lemma.

\begin{lemma}[Facing Triples]
 Let $X$ be an essential, non-Euclidean \cat cube complex such that $\Aut(X)$ acts with no global fixed point at infinity. Then for every halfspace $\h$, there exists a facing triple $\hh, \hk,\hm$ with $\hk, \hm\subset \h$.
 \label{FacingTriples}
\end{lemma}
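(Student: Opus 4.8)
The plan is to reduce the statement to producing, inside the prescribed halfspace, a \emph{facing pair} of halfspaces, and then to transport an abstract facing triple into $\h$ using the dynamics of the action. The elementary reformulation is this: a facing triple $\hh,\hk,\hm$ with $\hk,\hm\subset\h$ is exactly the data of two disjoint halfspaces $\k,\m$ with $\k,\m\subset\h$ and $\k\cap\m=\emptyset$. Indeed $\h^*,\k,\m$ are then pairwise disjoint, and conversely each of $\hk,\hm$ automatically faces $\hh$ once its chosen side lies in $\h$. So the lemma reduces to the claim that every halfspace contains a facing pair. I would then pass to the irreducible case: using the canonical product decomposition $X=X_1\times\cdots\times X_r$ (which is $\Aut(X)$-invariant up to permutation of the factors, with each $X_i$ non-Euclidean, essential, and $\Aut(X_i)$ fixing no point at infinity), every halfspace has the form $\h_i\times\prod_{j\ne i}X_j$, and a facing pair of $X_i$ inside $\h_i$ extends to one in $X$ inside $\h$. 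Thus it suffices to treat $X$ irreducible.

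In the irreducible case I would first produce a single facing triple somewhere in $X$. This is where non-Euclideanness enters: an irreducible, essential complex on which $\Aut(X)$ fixes no point at infinity is either a quasi-line (the Euclidean alternative) or contains a facing triple, since in the total absence of facing triples the non-crossing relation linearly orders the halfspaces and forces $X$ to be a quasi-line. This is the characterization from \cite{CapraceSageev2011}, which I would cite; the hypothesis non-Euclidean is precisely the negation of the quasi-line alternative, so a facing triple of pairwise disjoint halfspaces $\k,\m,\n$ exists.

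With such a facing triple in hand, note $\m,\n\subset\k^*$ and $\m\cap\n=\emptyset$, so the goal becomes to find $g\in G$ with $g\k^*\subset\h$; then $g\m,g\n\subset\h$ are disjoint, i.e.\ a facing pair inside $\h$, and $\{\hh,g\hm,g\hn\}$ is the sought facing triple. To produce $g$ I would invoke the Double Skewering Lemma of \cite{CapraceSageev2011}: for nested halfspaces $\t\subseteq\k^*$ there is $g\in G$ with $g\k^*\subsetneq\t$. It therefore suffices to exhibit a halfspace $\t$ with $\t\subseteq\k^*\cap\h$; essentiality supplies arbitrarily deep halfspaces, and after first moving the triple by an element of $G$ so that its \emph{outer} halfspace $\k^*$ contains a deep halfspace of $\h$ (equivalently $\k\subset\h^*$), such a $\t$ is available, whereupon double skewering shoves $\k^*$, and hence $\m,\n$, strictly inside $\h$.

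The main obstacle is the transport step, namely arranging the overlap $\t\subseteq\k^*\cap\h$, i.e.\ positioning the facing triple so that $\k^*$ swallows a deep halfspace of $\h$ before double skewering applies. Moving a single halfspace inside another is not automatic because of the transversality of hyperplanes to the axes of the available skewering elements, so one must combine the flipping and double-skewering lemmas rather than iterate a single element. This is where essentiality (arbitrarily deep points in every halfspace) and the no-fixed-point-at-infinity hypothesis do the real work and where the nesting hypotheses must be checked with care; the remaining bookkeeping is formal.
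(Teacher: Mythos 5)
The first thing to say is that the paper never proves this lemma at all: it is stated as recalled background, with its content outsourced to \cite{CapraceSageev2011} (see the references listed at the start of Section 2), so your proposal has to stand entirely on its own. Its skeleton is fine as far as it goes. The reformulation of the conclusion as ``a disjoint pair of halfspaces inside $\h$'' is correct; the reduction to irreducible factors is reasonable, modulo the fact --- asserted but not proved --- that essentiality, non-Euclideanness and the absence of a fixed point at infinity for $\Aut(X_i)$ all descend to the factors; and citing Caprace--Sageev for the existence of \emph{some} facing triple in the irreducible non-Euclidean case is legitimate, although your one-line heuristic for that citation is wrong: the absence of facing triples does not make the non-crossing relation a linear order. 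A staircase complex in the plane is irreducible and essential, has crossing hyperplanes (so no linear order), has no facing triples, and happens to be a quasi-line; so the dichotomy you quote is true but not for the reason you give.

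The genuine gap is the transport step, which you flag as ``the main obstacle'' and then dismiss as formal bookkeeping. It is not bookkeeping; it is the entire content of the lemma. What distinguishes this statement from the bare existence of facing triples is the localization inside an \emph{arbitrary} prescribed halfspace $\h$, and your plan gives no mechanism for achieving the required position $\k\subset\h^*$ (equivalently, for producing $\t\subseteq\k^*\cap\h$). The two tools you invoke cannot do this by themselves: the Flipping Lemma only inverts a halfspace into its own complement, and the Double Skewering Lemma applies only to pairs that are \emph{already nested}. Neither says anything when $\hh$ crosses the hyperplanes of your triple, and a priori $\hh$ could cross every hyperplane of every $\Aut(X)$-translate of the triple; ruling that out is exactly where irreducibility must be used, for instance via the Strongly Separated Pairs Lemma (no hyperplane crosses both members of a strongly separated pair), which your argument never touches. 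This is why the arguments in the literature build the triple inside $\h$ from the outset --- first a halfspace $\k_1\subset\h$ with $\hk_1$ strongly separated from $\hh$, then further strongly separated halfspaces and flipping/skewering elements whose translates are thereby trapped inside $\h$ --- rather than constructing an abstract triple elsewhere and pushing it into $\h$ afterwards. Two smaller cautions: ``$\k^*$ contains a deep halfspace of $\h$'' is implied by, but not equivalent to, $\k\subset\h^*$; and the flipping and skewering lemmas of \cite{CapraceSageev2011} require a group whose \emph{action} is essential, not merely an essential complex, so a complete write-up must specify which group your elements $g$ come from and why its action is essential --- under the lemma's literal hypotheses this is a genuine issue, not pedantry.
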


An important lemma for us regarding irreducible cube complexes involves strongly separated pairs. A pair of disjoint  hyperplanes $\hh$ and $\hk$ are called \emph{strongly separated} if there are no hyperplanes that intersect both $\hh$ and $\hk$. We will also refer to the corresponding nested pair of halfspaces $\h\subset\k$ as being strongly separated. We then have the following lemma.

\begin{lemma}[Strongly Separated Pairs]
 Let $X$ be an essential non-Euclidean \cat cube complex such that $\Aut(X)$ acts without a global fixed point at infinity. Then for every halfspace $\h$ there exists a halfspace $\k\subset\h$ such that $\hh$ and $\hk$ are strongly separated. 
 \label{StronglySeparated}
\end{lemma}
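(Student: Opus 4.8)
The plan is to argue by contradiction and to manufacture, from a hypothetical failure of strong separation, the complete bipartite crossing pattern that characterizes a product, thereby contradicting irreducibility. I read $X$ as irreducible here, as the sentence preceding the statement indicates, and this hypothesis is genuinely needed: in a product $X_1\times X_2$ every hyperplane from $X_1$ is crossed by every hyperplane from $X_2$, so two halfspaces in a common factor can never be strongly separated. Fix a halfspace $\h$. Since $\k\subsetneq\h$ already forces $\hh\cap\hk=\emptyset$, the only thing to arrange is condition that no hyperplane crosses both $\hh$ and $\hk$; so the goal is to find a halfspace $\k\subsetneq\h$, as deep as necessary, whose bounding hyperplane is crossed by no hyperplane that crosses $\hh$.

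First I would invoke the Double Skewering Lemma (see \cite{CapraceSageev2011}), which is available because the action is essential with no fixed point at infinity: using essentiality to pick a nested $\mathfrak a\subsetneq\h$ and skewering this pair yields $g\in\Aut(X)$ with $g\h\subsetneq\mathfrak a\subsetneq\h$. Iterating $g$ produces a descending chain
\begin{equation*}
\h\supsetneq g\h\supsetneq g^2\h\supsetneq\cdots,
\end{equation*}
whose bounding hyperplanes $\hh,g\hh,g^2\hh,\dots$ are pairwise disjoint and march off to the attracting fixed point of $g$. Now suppose, for contradiction, that no halfspace contained in $\h$ is strongly separated from $\hh$. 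Applying this to each $g^n\h\subsetneq\h$ furnishes a hyperplane $\hs_n$ crossing both $\hh$ and $g^n\hh$. Because $\hs_n$ crosses the outermost and innermost members of the nested chain $g^n\h\subset g^i\h\subset\h$, it must cross every hyperplane in between; hence $\hs_n$ in fact crosses $g^i\hh$ for all $0\le i\le n$, and I obtain, for every $N$, a single hyperplane crossing the whole initial segment $\hh,g\hh,\dots,g^N\hh$ of the $g$-chain.

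The next step is to organize these obstructions into two nonempty families $\mathcal A,\mathcal B$ of hyperplanes with every member of $\mathcal A$ crossing every member of $\mathcal B$, which by the product criterion of \cite{CapraceSageev2011} would force $X$ to be reducible, the desired contradiction. I would take $\mathcal A$ to be a cofinal sub-chain of the translates $g^i\hh$ and build $\mathcal B$ out of the hyperplanes $\hs_n$ together with their $g$-translates, using the Facing Triples Lemma (Lemma \ref{FacingTriples}) inside ever-deeper halfspaces to guarantee that the transverse direction populating $\mathcal B$ is genuinely present and bi-infinite rather than degenerating. Here the essentiality and no-fixed-point-at-infinity hypotheses are exactly what license the skewering and facing-triple constructions, while irreducibility is the property the resulting configuration violates.

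The main obstacle is precisely this organizing step. Each $\hs_n$ crosses a long \emph{initial} segment of the chain, but a priori the various $\hs_n$ need neither cross one another nor cross a common \emph{bi-infinite} sub-chain, whereas the product criterion demands complete cross-crossing between $\mathcal A$ and $\mathcal B$. Overcoming this requires the finite-dimensional pigeonhole bounding the number of pairwise-crossing hyperplanes by $\dim X$, in order to extract a coherent subfamily, together with a careful use of the $g$-action to promote ``crosses an initial segment'' into ``crosses a bi-infinite sub-chain.'' The delicate point is that the obstruction hyperplanes could instead accumulate coherently without spreading transversally; ruling this out, or showing it would itself pin down extra invariant structure, is the heart of the argument and the reason both irreducibility and the absence of a fixed point at infinity are indispensable.
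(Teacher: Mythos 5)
The paper contains no proof of Lemma \ref{StronglySeparated} to compare against: it is quoted as known background from \cite{CapraceSageev2011}, where it is one of the main technical results of the rank rigidity machinery. Your attempt must therefore stand on its own, and it does not. The preliminary steps are fine: irreducibility is indeed the relevant (implicit) hypothesis; the Double Skewering Lemma (Lemma \ref{DoubleSkewering}) applies to $\Aut(X)$ and yields a chain $\h\supsetneq g\h\supsetneq g^2\h\supsetneq\cdots$; and your sandwiching observation is correct, since a hyperplane crossing both $\hh$ and $g^n\hh$ has points on both sides of each intermediate $g^i\hh$ and hence crosses it by connectedness. But the argument stops exactly where the lemma begins. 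You say yourself that turning the hyperplanes $\hs_n$ --- each of which crosses only an initial segment of the chain, with no control relating $\hs_n$ to $\hs_m$, to the $g$-action, or to a common bi-infinite subchain --- into a coherent transversal family is ``the heart of the argument,'' and you leave that step entirely open. A proposal whose central step is flagged as an unresolved obstacle is a strategy, not a proof.

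Moreover, the target of the strategy is misstated, so even a successful ``organizing step'' as you describe it would not finish the argument. The product criterion of \cite{CapraceSageev2011} is not ``there exist two nonempty families $\mathcal A$, $\mathcal B$ with every member of $\mathcal A$ crossing every member of $\mathcal B$''; it is that the collection of \emph{all} hyperplanes of $X$ admits a partition into two such families. The weaker configuration you aim for carries no contradiction with irreducibility: any single pair of crossing hyperplanes in an irreducible complex already furnishes such an $\mathcal A$ and $\mathcal B$. The real content of the lemma is to show that a failure of strong separation propagates to \emph{every} hyperplane of $X$, sorting the entire collection $\cH$ into two transversal classes, or else produces an $\Aut(X)$-fixed point at infinity, contradicting the hypothesis. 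This is precisely what the actual proof in \cite{CapraceSageev2011} accomplishes, using the flipping lemma and a limiting argument substantially more delicate than a pigeonhole on $\dim X$; nothing in your outline addresses it, and the finite-dimensionality bound alone cannot, since the obstruction you correctly identify (the $\hs_n$ ``accumulating coherently without spreading transversally'') is exactly the case your tools do not rule out.
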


\subsection{Skewering} A halfspace $\h$ is said to be \emph{skewered} by an automorphism $g\in\Aut(X)$ if $g\h\subset\h$. We say that $g$ skewers the hyperplane $\hh$ if $g$ skewers $\h$ or $\h^*$. The  relevant lemma for us regarding skewering is the following. 

\begin{lemma}[Double Skewering]
Let $X$ be essential and $G$ act on $X$ either cocompactly or without a global fixed point at infinity. 
Then for every pair of halfspaces $\h\subset\k$, there exists $g\in G$ such that $g\k\subset\h$. 
\label{DoubleSkewering}
\end{lemma}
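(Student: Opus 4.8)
The plan is to deduce Double Skewering from the \emph{Flipping Lemma}, namely that for every halfspace $\h$ there is an element $g\in G$ with $g\h^*\subsetneq\h$ (the complementary halfspace is pushed strictly inside $\h$). Granting this, Single Skewering --- every halfspace is skewered --- falls out in one line: applying the Flipping Lemma to $\h$ gives $f$ with $f\h^*\subsetneq\h$, and applying it to $\h^*$ gives $f'$ with $f'\h\subsetneq\h^*$; since translation by a group element preserves inclusion of halfspaces, $f f'\h\subsetneq f\h^*\subsetneq\h$, so $g=ff'$ skewers $\h$. I would therefore organize the argument around first establishing the Flipping Lemma, which I expect to be the real content, and then harvesting skewering from it.

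For the Flipping Lemma I would argue by contradiction, and this is where the standing hypotheses enter decisively. Fix $\h$ and suppose no element flips it. Using essentiality, I would produce elements $g$ carrying a basepoint arbitrarily deep into $\h$, hence translates $g\hh$ lying deep inside $\h$; the failure of flipping forces these deep translates to organize into an infinite, strictly descending chain of halfspaces inside $\h$. Finite dimensionality (a uniform bound on pairwise-crossing families) prevents this chain from persistently straddling $\hh$, so it genuinely nests. The tail of such a chain defines a point of the visual boundary which, by the way it is built $G$-equivariantly, must be fixed by $G$ --- contradicting the no-global-fixed-point-at-infinity hypothesis. In the cocompact case the same descending chain must eventually repeat a $G$-orbit, which directly yields the desired flip. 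Either way the contradiction produces $g$ with $g\h^*\subsetneq\h$.

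With Flipping and Single Skewering in hand, I would assemble Double Skewering as follows. Given $\h\subsetneq\k$, Single Skewering supplies $t$ with $t\k\subsetneq\k$, so the sequence $t^n\k$ is strictly descending and accumulates at the attracting endpoint $\xi^+$ of $t$, which lies in $\overline{\k}$. If $\xi^+\in\h$, then $t^n\k\subsetneq\h$ for all large $n$ and $g=t^n$ works. If instead $\xi^+\in\h^*$, then $t^n\k\subset\h^*$ for all large $n$; choosing a flipping element $f$ for $\h$ (so $f\h^*\subsetneq\h$) gives $f t^n\k\subset f\h^*\subsetneq\h$, so $g=f t^n$ works. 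In both cases the element carries $\k$ strictly inside $\h$, as required.

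The main obstacle is unquestionably the Flipping Lemma, since it is the step that actually consumes the essentiality and the no-global-fixed-point (or cocompactness) hypotheses; without them a halfspace can fail to be flippable precisely because the relevant deep halfspaces drift toward a $G$-invariant point at infinity. A secondary technical point, needed in the final assembly, is the \emph{limit control} asserting that the nested chain $t^n\k$ is eventually contained in $\h$ or eventually in $\h^*$: this amounts to showing that the fixed hyperplane $\hh$ meets the descending family $t^n\hk$ only finitely often, which I would extract from finite dimensionality together with convexity of hyperplanes. These two points are where I expect the bulk of the work to lie; the remaining manipulations of halfspace inclusions are routine.
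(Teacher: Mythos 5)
You are right that the Flipping Lemma is the engine here, and your one-line deduction of single skewering from it is correct. Two caveats, though. First, the paper never proves the Double Skewering Lemma at all: it is quoted as background from Caprace--Sageev \cite{CapraceSageev2011}, where it is derived from the Flipping Lemma; so the comparison below is with that standard derivation. Second, your sketch of the Flipping Lemma does not reflect how that proof actually goes: the negation of flippability says every translate $g\h^*$ meets $\h^*$, which produces a $G$-invariant family of pairwise-intersecting halfspaces $\{g\h^*\}_{g\in G}$, \emph{not} a descending chain, and extracting a fixed point at infinity (or a contradiction with cocompactness) from such a family is the substantial part of \cite{CapraceSageev2011}. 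Since the present paper itself uses the Flipping Lemma as a known citation (in the proof of Proposition \ref{bound}), treating it as a black box is legitimate; but your sketch should not be mistaken for a proof of it.

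The genuine gap is your ``limit control'' step: the claim that the fixed hyperplane $\hh$ can meet the descending family $t^n\hk$ only finitely often, which you propose to extract from finite dimensionality together with convexity. Finite dimensionality bounds families of pairwise \emph{crossing} hyperplanes; the family $\{t^n\hk\}$ is pairwise \emph{disjoint}, and a single hyperplane can perfectly well cross infinitely many pairwise disjoint hyperplanes even in dimension two --- in the standard cubulation of $\mathbb{R}^2$, a horizontal hyperplane crosses every vertical one. The extra constraints you do have ($\h\subset\k$, so $\hh\cap\hk=\emptyset$) are never brought to bear, and no argument is given that they rescue the dichotomy ``$t^n\k$ eventually inside $\h$ or eventually inside $\h^*$''; as written, this step is simply asserted, and it is exactly where your assembly breaks. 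The irony is that the detour through attracting endpoints is unnecessary: double skewering follows from \emph{two} applications of Flipping with no boundary dynamics and no dimension count. Flip $\h$ to get $f\in G$ with $f\h^*\subsetneq\h$; since $\k^*\subset\h^*$, this gives $f\k^*\subset f\h^*\subset\h$. Now flip the halfspace $f\k^*$ to get $f'\in G$ with $f'(f\k^*)^*\subsetneq f\k^*$, i.e.\ $f'f\k\subset f\k^*\subset\h$. Then $g=f'f$ satisfies $g\k\subset\h$, as required. This is the derivation in \cite{CapraceSageev2011}, and it replaces your problematic final paragraph entirely.
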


As a corollary of the Double Skewering Lemma, we have that every halfspace is skewered by some element. For given a halfspace $\h$, there exists some $\h\subset\k$ and then the element ensured by the Double Skewering Lemma skewers $\h$.

In fact, a generalization of this for products can be established. More precisely (Theorem C of \cite{CapraceSageev2011}), one can show the following.

\begin{theorem}
Let $X=X_1\times\ldots\times X_n$ be a product of infinite, locally compact \cat cube complexes such that $\Aut(X_i)$ acts cocompactly on $X_i$ for each $i$. Suppose that $G$ is a lattice in $\Aut(X)$. Suppose that $\h_i\subset\k_i$ are nested halfspaces in each factor $X_i$. Then there exists $g\in G$ which simultaneously double skewers these hyperplanes. That is to say, for each $i$, $g\k_i\subset\h_i$. 
\label{SimultaneousDoubleSkewering}
\end{theorem}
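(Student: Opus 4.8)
The plan is to reduce the statement to a single-element contraction problem in the ambient locally compact group $H = \Aut(X) = \prod_i \Aut(X_i)$ and then to locate the desired element inside the lattice $G$ by a recurrence argument. First I would record the elementary observation that if $g\k_i \subset \h_i$ for every $i$ then $g$ simultaneously double skewers, so it suffices to produce one $g \in G$ whose $i$-th coordinate pushes $\k_i$ inside $\h_i$ for all $i$ at once. The obstruction is that $G$, being merely a lattice, does not split as a product of lattices in the factors, so one cannot choose the coordinates of $g$ independently; the factor dynamics must be synchronized by a single element of $G$.

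\emph{Step 1 (contraction in each factor).} In the factor $X_i$, the hypothesis that $\Aut(X_i)$ acts cocompactly on the essential complex $X_i$ lets me apply the Double Skewering Lemma (Lemma \ref{DoubleSkewering}) to the nested pair $\h_i \subset \k_i$, producing $t_i \in \Aut(X_i)$ with $t_i\k_i \subset \h_i$. Since $\h_i \subset \k_i$, this gives $t_i\k_i \subset \k_i$, so $t_i$ skewers $\k_i$ and the halfspaces $\cdots \subset t_i^2\k_i \subset t_i\k_i \subset \k_i$ form a strictly descending nested chain contained in $\h_i$. By local compactness the bounding hyperplanes $t_i^m\hk_i$ must escape to infinity, so for every depth $D$ there is $m_0$ with $t_i^m\k_i$ lying at combinatorial distance $\ge D$ from $\hh_i$ inside $\h_i$ for all $m \ge m_0$. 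Setting $a = (t_1, \ldots, t_n) \in H$, the powers $a^m$ drive $\k_i$ arbitrarily deep inside $\h_i$ in every factor simultaneously.

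\emph{Step 2 (recurrence inside the lattice).} Because $G$ is a lattice, $H$ is unimodular and $H/G$ carries a finite $H$-invariant measure $\mu$; in particular left translation $L_a \colon hG \mapsto ahG$ preserves $\mu$. I would fix a small symmetric compact neighborhood $W$ of the identity in $H$ and apply the Poincar\'e recurrence theorem to $L_a$ on the positive-measure set $\pi(W)$, where $\pi \colon H \to H/G$ is the quotient map. This yields $h_0 \in W$ and arbitrarily large $m$ with $a^m h_0 \in WG$, say $a^m h_0 = w\gamma$ with $w \in W$ and $\gamma \in G$. Solving, $\gamma = w^{-1}a^m h_0$, an element of $G$ that differs from the deep contraction $a^m$ only by the bounded factors $h_0$ and $w^{-1}$ drawn from the fixed compact set $W$.

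\emph{Step 3 (absorbing the error) and the main obstacle.} It remains to check $\gamma\k_i \subset \h_i$. Choosing $W$ small enough, the halfspaces $\{h_0\k_i : h_0 \in W\}$ form a bounded family all avoiding the repelling end of $t_i$, so $a^m(h_0\k_i)$ is driven uniformly deep inside $\h_i$ as $m \to \infty$; and since $W$ is compact, every $w \in W$ displaces halfspaces by a uniformly bounded combinatorial amount $C$. Taking the contraction depth of Step 1 to exceed $C$ and $m$ correspondingly large (recurrence supplies arbitrarily large $m$), the perturbation $w^{-1}$ cannot pull $a^m(h_0\k_i)$ back across $\hh_i$, so $\gamma\k_i = w^{-1}a^m h_0\k_i \subset \h_i$ for every $i$, and $g = \gamma$ is the required element. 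The main obstacle is precisely this synchronization-versus-error balance: one must guarantee that a single compact neighborhood $W$ simultaneously (a) is small enough that the perturbations $h_0\k_i$ remain in the uniform basin of attraction of each $t_i$ and (b) displaces halfspaces by less than the contraction depth achievable for large $m$. I expect this uniform control of the north--south dynamics of each $t_i$ over the $W$-orbit of the configuration, rather than the recurrence step itself, to be the technical heart of the argument.
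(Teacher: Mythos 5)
First, a point of reference: the paper does not actually prove Theorem \ref{SimultaneousDoubleSkewering}; it quotes it as Theorem C of \cite{CapraceSageev2011}. So your attempt should be measured against the argument in that reference, which is indeed the route you outline: factor-wise double skewering assembled into a single element $a=(t_1,\ldots,t_n)\in H=\prod_i\Aut(X_i)$ with $a\k_i\subset\h_i$ for all $i$, followed by Poincar\'e recurrence for left translation by $a$ on the finite-measure space $H/G$ to produce $\gamma=w^{-1}a^m h_0\in G$ with $h_0,w$ in a fixed identity neighborhood $W$. Your Steps 1 and 2 are correct and are exactly this argument.

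Step 3, however, hinges on a claim that is false: compactness of $W$ does \emph{not} yield a uniform bound $C$ on how far elements of $W$ displace halfspaces. In $\Aut(X)$ with its natural (compact-open) topology, even the pointwise stabilizer of a large ball $B(v_0,R)$ in a regular tree --- a compact open subgroup, hence ``as small as you like'' --- contains elements that swap two branches hanging off the same vertex of the sphere of radius $R$; such an element moves hyperplanes at distance $n$ from $v_0$ by roughly $2(n-R)$, which is unbounded in $n$. Consequently the inequality ``displacement $<$ contraction depth'' that Step 3 needs cannot be arranged no matter how deep $a^m$ pushes $\k_i$, and the companion claim that $h_0\k_i$ lies in a ``uniform basin of attraction'' of $t_i$ is equally unsupported (north--south dynamics on halfspaces is not available in this generality). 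The mechanism that actually works --- and the reason the branch-swapping element above is in fact harmless --- is algebraic, not metric: the stabilizer of a halfspace is an \emph{open} subgroup of $\Aut(X)$, because it contains the pointwise stabilizer of the two endpoints of any edge dual to the bounding hyperplane. So choose $W$ inside the open subgroup $\bigcap_i\bigl(\Stab(\h_i)\cap\Stab(\k_i)\bigr)$; then $\pi(W)\subset H/G$ is still open of positive measure (the invariant measure has full support), recurrence produces $\gamma=w^{-1}a^m h_0\in G$ with $h_0,w\in W$ and $m\ge 1$, and every estimate evaporates: $h_0\k_i=\k_i$, then $a^m\k_i\subset\h_i$ (since $a\k_i\subset\h_i\subset\k_i$ gives $a^m\k_i\subset a\k_i$), and finally $w^{-1}\h_i=\h_i$, so $\gamma\k_i\subset\h_i$ for every $i$. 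With this replacement your proof is complete, and the quantitative ``depth'' discussion in Step 1 becomes unnecessary.
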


\subsection{The Roller Boundary}
As before, let $X$ be essential. We will consider here a certain part of the Roller boundary which will be useful to us (see \cite{Sageev2014} for basics on ultrafilters and the Roller boundary). Let $\cH$ denote the collection of halfspaces of $X$. Recall that an ultrafilter on $\cH$ is a subset $\alpha\subset \cH$ satisfying 
\begin{enumerate}
 \item (Choice) For each pair ${\h,\h^*}$, exactly one of $\h$ or $\h^*$ is in $\alpha$.
 \item (Consistency) If $\h\subset\k$ and $\h\in\alpha$ then $\k\in\alpha$.
\end{enumerate}

The collection of all ultrafilters ${\mathcal U}(X)$ has a natural topology induced by the Tychonoff topology on $2^\cH$. This has as a basis the collection of \emph{halfspace neighborhoods}, where a halfspace neighborhood is a subset of ${\mathcal U}(X)$ of the form 
\[U_\h\equiv\{\alpha\in{\mathcal U}(X)\vert \h\in\alpha\}\]

One can show that the collection of ultrafilters is then closed in $2^\cH$. 
The vertices of $X$ correspond to those ultrafilters satisfying the descending chain condition (DCC). 
The Roller Boundary is defined to be the complement in ${\mathcal U}(X)$ of the DCC ultrafilters. It is closed in ${\mathcal U}(X)$ as well and is therefore compact.

On the opposite side of the spectrum for ultrafilters,  we have what we call \emph{strongly separated} ultrafilters. 

\begin{definition}
An ultrafilter $\alpha$ is \emph{strongly separated} if there exists an infinite  nested sequence of halfspaces $\h_1\supset\h_2\ldots\in\alpha$ such that $\h_i$ and $\h_{i+1}$ are strongly separated. We call such a sequence of halfspaces a \emph{strongly separated sequence} of halfspaces.
\end{definition}

It is easy to see that there are strongly separated sequences of halfspaces, since by Lemma \ref{StronglySeparated},  any halfspace $\h$ contains a halfspace strongly separated from it. In fact, by employing the Facing Triple Lemma, there exist uncountably many strongly separated sequences. A key observation is that a strongly separated sequence uniquely determines an ultrafilter.

\begin{lemma}
For every strongly separated sequence of halfspaces $\h_1\supset\h_2\ldots$,  there exists a unique ultrafilter $\alpha$ such that
$\h_i\in\alpha$.
\label{filter}
\end{lemma}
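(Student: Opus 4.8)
The plan is to prove existence and uniqueness separately, with existence being essentially formal and uniqueness carrying all the content of strong separation. For existence, I would note that the halfspace neighborhoods $U_{\h_i}$ are clopen in the compact space $\mathcal U(X)$, and that consistency of ultrafilters gives $U_{\h_1}\supseteq U_{\h_2}\supseteq\cdots$, since $\h_{i+1}\in\alpha$ forces $\h_i\in\alpha$. Each $U_{\h_i}$ is nonempty because the vertex ultrafilter of any vertex inside $\h_i$ contains $\h_i$. A nested sequence of nonempty closed sets in a compact space has nonempty intersection, and any $\alpha\in\bigcap_i U_{\h_i}$ contains every $\h_i$. Note that this step uses nothing about strong separation.

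For uniqueness, suppose $\alpha\neq\beta$ both contain all the $\h_i$, and pick a halfspace with $\k\in\alpha$ and $\k^*\in\beta$. Since $\k,\h_i\in\alpha$ and $\k^*,\h_i\in\beta$, the halfspace $\h_i$ meets both $\k$ and $\k^*$, so for each $i$ exactly one of the following holds: $\hk$ crosses $\hh_i$, or $\k\subsetneq\h_i$, or $\k^*\subsetneq\h_i$. The main point, and the only place strong separation enters, is the following \emph{crossing lemma}: $\hk$ crosses at most one of the $\hh_i$. I would prove this by contradiction: if $\hk$ crosses $\hh_i$ and $\hh_j$ with $i<j$, then (the case $j=i+1$ being immediate) for $j\geq i+2$ the hyperplane $\hk$ meets the open halfspace $\h_j\subseteq\h_{i+1}$ and also meets $\h_i^*\subseteq\h_{i+1}^*$; since $\hk$ is connected and has points on both sides of $\hh_{i+1}$, it must cross $\hh_{i+1}$. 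But then $\hk$ crosses both $\hh_i$ and $\hh_{i+1}$, contradicting the strong separation of the consecutive pair. I would make sure to record the elementary facts used here: that crossing $\hh_j$ means $\hk$ genuinely enters $\h_j$, and that a connected hyperplane separated by $\hh_{i+1}$ must cross it.

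Granting the crossing lemma, for all large $i$ only the two nesting cases remain, and they are mutually exclusive across indices (if $\k\subset\h_i$ and $\k^*\subset\h_j$ then the larger-index halfspace is contained in the smaller, forcing $\k\cup\k^*$ into a proper halfspace, which is absurd). Hence, after passing to large $i$, one alternative holds for \emph{all} $i\geq N$, say $\k\subset\h_i$ for all $i\geq N$ (the case of $\k^*$ being symmetric). To finish I would rule this out by a distance argument: fix a vertex $x_0\in\h_N^*$ and a vertex $y\in\k$. Then $x_0\in\h_i^*$ and $y\in\h_i$ for every $i\geq N$, so each of the infinitely many distinct hyperplanes $\hh_i$ separates $x_0$ from $y$, forcing $d(x_0,y)=\infty$, which is impossible. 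Thus no such differing halfspace $\k$ exists and $\alpha=\beta$. I expect the crossing lemma to be the main obstacle, since it is the single step where strong separation is indispensable (the $\mathbf{Z}^2$ example, where a single hyperplane crosses every member of a nested sequence, shows uniqueness genuinely fails without it); by contrast, both the existence step and the final distance argument are soft and require only that the sequence be infinite and nested.
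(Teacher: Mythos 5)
Your proof is correct, and it is organized along genuinely different lines from the paper's. The paper is constructive: it defines the ultrafilter explicitly as $\alpha=\{\h \mid \h_i\subset\h \text{ for infinitely many } i\}$, checks the choice and consistency axioms, and then obtains uniqueness quickly, since for any $\h\in\beta$ the consistency axiom applied to a suitable $\h_i$ forces $\h\in\alpha$. You instead get existence for free from compactness of $\mathcal{U}(X)$ (the nested nonempty clopen sets $U_{\h_i}$ have a common point) and concentrate all the geometry in uniqueness, which you prove by direct comparison of two ultrafilters: trichotomy, eventual nesting, and the finite-distance contradiction. The two arguments share the same geometric core, namely your crossing lemma: the paper states it as an observation (`any given hyperplane may intersect at most one of the $\hh_i$'s') without proof, while you supply the right justification via connectedness and separation. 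Your route buys two things: it isolates exactly where strong separation is indispensable (only in uniqueness, as your $\Z^2$ example confirms), and your distance argument makes explicit a step the paper elides --- the paper's claim that exactly one of $\h$, $\h^*$ contains infinitely many $\h_i$'s silently discards the nesting cases $\h\subset\h_i$ and $\h^*\subset\h_i$, which is precisely what your infinitely-many-separating-hyperplanes argument rules out. What the paper's route buys is an explicit description of the limit ultrafilter: $\h\in\alpha$ if and only if $\h$ contains a tail of the sequence, which is more informative than bare existence via compactness.
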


\begin{proof}
We define an ultrafilter as follows.

\[\alpha=\{\h\vert \h_i\subset\h \text{ for infinitely many } i \}\]

By definition $\h_i\in\alpha$ for each $i$. We are left to check that $\alpha$ satisfies the two conditions necessary for an ultrafilter  (choice and consistency) and then that it is unique. Any given hyperplane $\hh$ may intersect at most one of the $\hh_i$'s. It follows that exactly one of the halfspaces $\h$, $\h^*$ contains infinitely many $\h_i$'s, thus precisely one of $\h,\h^*$ is in $\alpha$. The consistency condition is immediate since if infinitely many $\h_i$ satisfy $\h_i\subset\h$ and $\h\subset\k$ then $\h_i\subset\k$ for infinitely many $i$. 

To see uniqueness, let $\beta$ be an ultrafilter such that $\h_i\in\beta$ for all $i$. Then for any $\h\in\beta$, observe that $\hh$ may intersect at most one $\hh_i$. Consequently, either $\h$ contains infinitely many $\h_i$'s or $\h^*$ contains infinitely many $\h_i$'s. Choose one such $\h_i$. Since $\h,\h_i\in\beta$, by the consistency condition we have that $\h_i\subset\h$ (and not $\h_i\subset\h^*$). This means that $\h\in\alpha$. So $\alpha$ and $\beta$ make the same choices  for each pair $\h,\h^*$ and hence $\alpha=\beta$.
\end{proof}

We define $S(X)$ to be the closure in ${\mathcal U}$  of the collection of strongly separated ultrafilters. It is a compact subspace of the Roller Boundary. 

Next we see that strongly separated utrafilters behave nicely with respect to medians. Recall that given three ultrafilters $\alpha, \beta, \gamma$, the \emph{median} of $\alpha, \beta$ and $\gamma$ is defined as \[med(\alpha,\beta,\gamma)\equiv(\alpha\cap\beta)\cup(\beta\cap\gamma)\cup(\gamma\cap\alpha).\]

\begin{lemma}
 Let $\alpha,\beta,\gamma$ be distinct strongly separated ultrafilters. Then the $med(\alpha,\beta,\gamma)$ satisfies DCC and hence is a vertex of $X$. 
 \label{MedianInSpace}
\end{lemma}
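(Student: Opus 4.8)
The plan is to show that $\med(\alpha,\beta,\gamma)$ satisfies the descending chain condition (DCC) by exploiting the strong separation of the three ultrafilters. First I would fix, for each of the three, a strongly separated sequence witnessing its strong separation: say $\a_1\supset\a_2\supset\cdots\in\alpha$, $\b_1\supset\b_2\supset\cdots\in\beta$, and $\c_1\supset\c_2\supset\cdots\in\gamma$, with consecutive terms in each sequence being strongly separated. The key structural fact I want to extract is the following rigidity: if $\hh$ and $\hk$ are strongly separated hyperplanes, then \emph{no} hyperplane can cross both. I expect to leverage this against any hypothetical infinite descending chain of halfspaces inside the median.

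The main idea is a proof by contradiction. Suppose $\m=\med(\alpha,\beta,\gamma)$ fails DCC, so there is an infinite strictly descending chain $\m_1\supsetneq\m_2\supsetneq\cdots$ of halfspaces all lying in $\m$. By definition of the median, each $\m_j$ lies in at least two of $\alpha,\beta,\gamma$; by pigeonhole, infinitely many of the $\m_j$ lie in the same pair, say in $\alpha\cap\beta$, so after passing to a subsequence I may assume the whole chain lies in $\alpha\cap\beta$. Now I would compare this descending chain against the strongly separated sequence $\a_1\supset\a_2\supset\cdots$ for $\alpha$. Since $\alpha$ is an ultrafilter and the $\a_i$ form a strongly separated sequence inside it, I want to argue that the descending chain $\{\m_j\}$ must eventually be ``sandwiched'' in a way incompatible with strong separation: I expect that for large $i$, the hyperplane $\hat\a_i$ is strongly separated from the hyperplanes bounding the $\m_j$, yet the nesting relations forced by consistency among the $\a_i$'s and $\m_j$'s produce a hyperplane crossing two strongly separated hyperplanes, a contradiction. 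Concretely, I would use that $\alpha$ contains both the infinite chain $\{\a_i\}$ and the infinite chain $\{\m_j\}$, and that two deeply nested strongly separated halfspaces cannot both be straddled.

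The step I expect to be the main obstacle is making precise the interaction between the two infinite descending chains sitting inside a single strongly separated ultrafilter. The cleanest route is probably to first prove an auxiliary claim: \emph{a strongly separated ultrafilter $\alpha$ cannot contain an infinite descending chain of halfspaces that is disjoint (as a hyperplane set) from, or transverse to, its witnessing strongly separated sequence}, and to show that any descending chain forces cofinally many terms to nest below terms of the $\a_i$. Once one knows that the chain $\{\m_j\}$ and the sequence $\{\a_i\}$ interleave into a common nested chain, the strong separation of consecutive $\a_i$ bounds the number of $\m_j$ that can lie strictly between $\a_i$ and $\a_{i+1}$ — in fact strong separation should force this count to be finite at each level while an infinite chain demands infinitely many, giving the contradiction. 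The combinatorial bookkeeping here, ensuring that "at most one hyperplane crosses" translates into a genuine finiteness bound on intermediate halfspaces, is where care is needed; the median structure and the symmetric roles of $\alpha,\beta,\gamma$ then finish the argument and identify $\med(\alpha,\beta,\gamma)$ as a DCC ultrafilter, i.e. a vertex of $X$.
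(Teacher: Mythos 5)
There is a genuine gap, and it sits exactly where you predicted the ``main obstacle'' would be. Your opening moves --- assume an infinite descending chain $\m_1\supset\m_2\supset\cdots$ in the median, note that each term lies in at least two of the three ultrafilters, and pigeonhole so that (after passing to a subsequence) the whole chain lies in $\alpha\cap\beta$ --- are precisely the paper's first steps. But from that point on your argument uses only $\alpha$: you try to derive a contradiction from how the chain interacts with the witnessing strongly separated sequence of the single ultrafilter $\alpha$. No such contradiction can exist, because a strongly separated ultrafilter is a non-DCC ultrafilter by its very definition: its own witnessing sequence is an infinite descending chain inside it. Your proposed punchline --- strong separation forces only finitely many $\m_j$ between consecutive terms of the witnessing sequence, while an infinite chain demands infinitely many --- is a non sequitur: an infinite chain can perfectly well interleave with the witnessing sequence, contributing at most one term per level but running through infinitely many levels, which is exactly what the witnessing sequence itself does. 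So the auxiliary claim you hope to prove is false in any form strong enough to finish the argument.

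The missing idea is that the contradiction must exploit the fact that the chain lies in \emph{two distinct} strongly separated ultrafilters simultaneously; this is what the paper does. Since $\alpha\neq\beta$, there exist halfspaces $\h\in\alpha$ and $\k\in\beta$ with $\h\cap\k=\emptyset$ and with $\hh$ and $\hk$ strongly separated (separate $\alpha$ from $\beta$ by some halfspace and take deep consecutive terms of the two witnessing sequences on either side). Now each $\m_j$ lies in $\alpha$, so $\m_j\cap\h\neq\emptyset$, and lies in $\beta$, so $\m_j\cap\k\neq\emptyset$. Since $\h\cap\k=\emptyset$, the term $\m_j$ can be contained in neither $\h$ nor $\k$; and since the chain is infinite and strictly descending, none of $\h$, $\h^*$, $\k$, $\k^*$ can be contained in every $\m_j$, because the intersection of the chain contains no vertex (any vertex in all the $\m_j$ would be separated from a vertex of $\m_1^*$ by the infinitely many distinct hyperplanes $\hm_j$, whereas two vertices are separated by only finitely many hyperplanes). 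The only remaining possibility is that for $j$ large the hyperplane $\hm_j$ crosses both $\hh$ and $\hk$, contradicting their strong separation. Note how $\beta$ is indispensable here: after your pigeonhole step, $\beta$ never reappears in your argument, and that is precisely why it cannot close.
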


\begin{proof}
We need to show that $\mu=med(\alpha,\beta,\gamma)$ satisfies the descending chain condition (see Figure \ref{Fig:MedianInSpace}). Suppose that $\h_1\supset\h_2\ldots$ is an infinite sequence of \nobreak{halfspaces} such that $\h_i\in\mu$. Then after passing to a subsequence, we may assume that $\h_i\in\alpha\cap\beta$ for all $i$. Since $\alpha$ and $\beta$ are distinct  strongly separated ultrafilters, there exist $\h\in\alpha$ and $\k\in\beta$ such that $\h\cap\k=\emptyset$ and $\hh$ and $\hk$ are strongly separated. Since $\h_i\in\alpha$, we have that $\h_i\cap\h\not=\emptyset$ and
\begin{figure}[h]
\includegraphics[width=.70\textwidth]{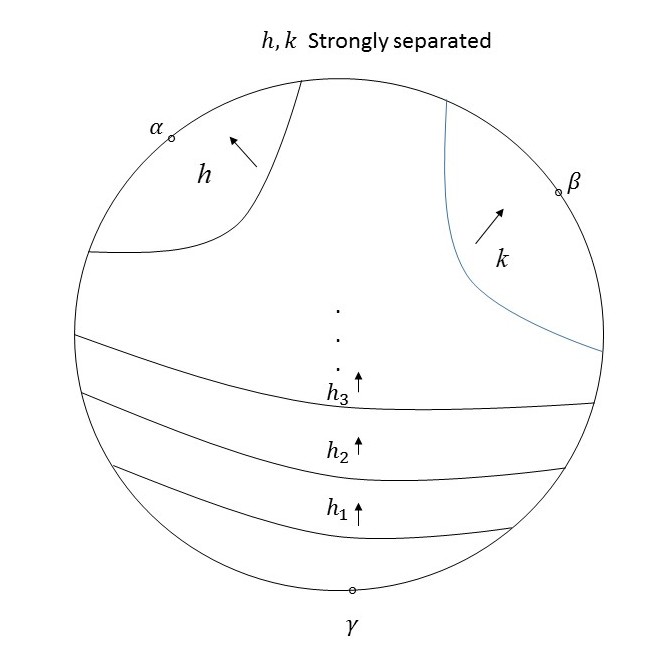}
 \caption{The median of strongly separated ultrafilters satisfies DCC.}
 \label{Fig:MedianInSpace}
\end{figure}
 $\h_i\cap\k\not=\emptyset$. But if $\{\h_i\}$ is an infinite descending sequence of hyperplanes, we must have that for $i$ sufficiently large,   $\h_i\subset\h$ or $\hh_i\cap\hh\not=\emptyset$.
 Similarly, for $i$ sufficiently large, we must have $\h_i\subset\k$ or $\hh_i\cap\hk\not=\emptyset$.
 But this contradicts the fact that $\hh$ and $\hk$ are strongly separated. 
  \end{proof}

We will also need the following lemma telling us that halfspace neighborhoods form a basic collection of open neighborhoods for the strongly separated ultrafilters. 

\begin{lemma}
Let $U\subset S(X)$ be an open neighborhood of $\alpha\in S(X)$, where $\alpha$ is a strongly separated ultrafilter. Then there exists a halfspace $\h$ such that $\alpha\in (U_\h\cap S(X))\subset U$.
\label{HalfSpaceNbhds}
\end{lemma}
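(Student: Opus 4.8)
\emph{Proof proposal.} The plan is to show that for a strongly separated $\alpha$ a \emph{single} halfspace neighborhood already refines any given neighborhood, by shrinking an arbitrary basic neighborhood of $\alpha$ using the nested strongly separated sequence underlying $\alpha$. First I would recall the topology. Since $S(X)$ carries the subspace topology from ${\mathcal U}(X)\subset 2^\cH$ with the Tychonoff topology, and since for an ultrafilter $\h\notin\alpha$ is equivalent to $\h^*\in\alpha$, the subbasic open sets of ${\mathcal U}(X)$ are precisely the halfspace neighborhoods $U_\h$; hence a basis is given by finite intersections $U_{\h_1}\cap\cdots\cap U_{\h_n}$. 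Thus, given the open set $U$ with $\alpha\in U$, I would choose halfspaces $\h_1,\ldots,\h_n$ with $\alpha\in (U_{\h_1}\cap\cdots\cap U_{\h_n}\cap S(X))\subset U$, so that in particular $\h_1,\ldots,\h_n\in\alpha$.

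The heart of the argument is to produce a single halfspace $\h\in\alpha$ with $\h\subset\h_j$ for every $j$. Here I would exploit that $\alpha$ is strongly separated: fix a strongly separated sequence $\k_1\supset\k_2\supset\cdots$ of halfspaces lying in $\alpha$. By Lemma \ref{filter} this sequence determines a unique ultrafilter containing all the $\k_i$, and since $\alpha$ is such an ultrafilter, $\alpha=\{\h : \k_i\subset\h \text{ for infinitely many } i\}$. Consequently, for each $j$ there is an index $i_j$ with $\k_{i_j}\subset\h_j$. Setting $i^*=\max_j i_j$ and $\h=\k_{i^*}$, the nesting of the sequence gives $\h=\k_{i^*}\subset\k_{i_j}\subset\h_j$ for all $j$, and $\h\in\alpha$.

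Finally I would conclude by consistency: if $\beta\in U_\h$ then $\h\in\beta$, and $\h\subset\h_j$ forces $\h_j\in\beta$, so $\beta\in U_{\h_j}$. Hence $U_\h\cap S(X)\subset\bigcap_j(U_{\h_j}\cap S(X))\subset U$, while $\alpha\in U_\h$ since $\h\in\alpha$, which is exactly the desired conclusion. The only delicate point, and the step I expect to be the main obstacle, is the reduction from a finite intersection of halfspace neighborhoods to a single one: everything hinges on recognizing via the uniqueness in Lemma \ref{filter} that $\alpha$ coincides with the ultrafilter read off from its strongly separated sequence, which is what allows all the $\h_j$ to be absorbed into one sufficiently deep halfspace $\k_{i^*}$ of that sequence.
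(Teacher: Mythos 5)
Your proof is correct and follows essentially the same route as the paper: reduce to the case where $U$ is a finite intersection of subbasic halfspace neighborhoods $U_{\h_1}\cap\cdots\cap U_{\h_n}$, then use the strongly separated sequence in $\alpha$ to find a single sufficiently deep halfspace $\k_{i^*}\in\alpha$ contained in every $\h_j$, and conclude by consistency. Your explicit appeal to the uniqueness statement of Lemma \ref{filter} to justify that a tail of the sequence lies in each $\h_j$ is a slightly more careful rendering of a step the paper states without elaboration, but the argument is the same.
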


\begin{proof}
Since the halfspace neighborhoods $U_\h$ serve as a collection of sub-basic open sets for the topology on ${\mathcal U}(X)$ and hence of $S(X)$, it suffices to prove this when  $U$ is a finite intersection of halfspace neighborhoods of $\alpha$. That is, we assume that there exist halfspaces $\h_1,\ldots,\h_n$ such that $U=\cap U_{\h_i} \cap S(X)$. Since $\alpha$ is a strongly separated ultrafilter, there exists a strongly separated sequence $\k_1\supset\k_2\ldots$ with $\k_i\in\alpha$. For each $\h_i$, we then know that there exists a tail of the strongly separated sequence contained in $\h_i$. Consequently, there exists a single $\k_j$ such that $\k_j\subset\h_i$ for all $i$. We then have that $\alpha\in U_{\k_j}\cap S(X)\subset U$ as required. 
\end{proof}

\subsection{Ping Pong}

We will use the following version of the Ping-Pong Lemma.

\begin{lemma}[Ping-Pong Lemma]
Let $S$ be a set and let $G$ be a group acting on $S$. Let $H,K<G$ be subgroups of $G$. Suppose that there exist two disjoint subsets $U,V\subset S$ such that for all for all $1\not=h\in H$, we have 
$hU\subset V$ and for all $1\not=k\in K$, $kV\subset U$. Then $<H,K>\cong H*K$.
\label{PingPong}
\end{lemma}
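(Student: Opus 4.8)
The plan is to prove that the natural surjection $\phi\colon H*K\to\langle H,K\rangle$, which restricts to the identity on each free factor, is injective; its surjectivity is automatic. We may assume $U,V\neq\emptyset$, as otherwise the hypotheses degenerate. It then suffices to show that every nontrivial element of $H*K$, written as a \emph{reduced word} $w=s_1 s_2\cdots s_n$ whose consecutive letters come alternately from $H\setminus\{1\}$ and $K\setminus\{1\}$, acts nontrivially on $S$ and is therefore not the identity of $G$.

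The core computation treats words that both begin and end with a letter of $H$. If $w=h_0 k_1 h_1\cdots k_m h_m$ with each $h_i\in H\setminus\{1\}$ and each $k_j\in K\setminus\{1\}$, then I apply $w$ to a point of $U$ and read the letters from right to left: the rightmost letter $h_m$ sends $U$ into $V$ by $h_m U\subset V$, the next letter $k_m$ sends $V$ back into $U$ by $k_m V\subset U$, and so on. This alternating chain $U\to V\to U\to\cdots\to V$ terminates with the leftmost letter $h_0$ applied to a point of $U$, so $wU\subset V$. Since $U\cap V=\emptyset$ and $U\neq\emptyset$, no point of $U$ is fixed, whence $w\neq 1$ in $G$. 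The symmetric statement — a word beginning and ending with a letter of $K$ satisfies $wV\subset U$ and is likewise nontrivial — follows by interchanging the roles of $U,V$ and of $H,K$.

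The remaining reduced words are those whose first and last letters lie in different factors, and I would reduce these to the core case by conjugation, which is legitimate since triviality in $G$ is a conjugacy invariant. For such a $w$, one chooses a nontrivial element $h$ of a factor of order greater than $2$ and avoids the single value of $h$ that would cause cancellation with the end letters of $w$; then $h\,w\,h^{-1}$ is again a reduced word, now beginning and ending with a letter from that factor, and $w=1$ if and only if $hwh^{-1}=1$. By the core computation $hwh^{-1}\neq1$, so $w\neq1$, and with all four types of reduced words shown to act nontrivially we conclude $\ker\phi=\{1\}$ and hence $\langle H,K\rangle\cong H*K$.

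The main obstacle is exactly this last reduction: a word that is already cyclically reduced (first and last letters in distinct factors) cannot be symmetrized by any cyclic permutation, so the conjugation trick must supply a genuinely new end letter, and this requires a conjugating element distinct from one forbidden value. Such an element exists precisely when one of the factors has more than two elements — a mild point worth flagging, since it is needed to rule out the degenerate situation in which $U,V$ are merely permuted by order-two factors. In all our applications the partner subgroup is the infinite cyclic group $\langle g\rangle$, so a suitable conjugating element is always available and the argument goes through.
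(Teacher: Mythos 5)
Your proof is correct, and in fact the paper offers no proof at all to compare it with: the lemma is stated in the Preliminaries as a classical result (Klein's ping pong) and used as a black box, so you have supplied the standard argument — reduce to showing every reduced word in $H*K$ acts nontrivially, handle words beginning and ending in the same factor by the alternating chain $U\to V\to\cdots$, and handle cyclically reduced words by conjugating into that case. The most valuable part of your write-up is the caveat you flag at the end, which is a genuine (and well-known) imprecision in the lemma as printed: without the hypothesis that $U,V$ are nonempty and that at least one of $H,K$ has order at least $3$, the statement is false. For instance, let $\sigma$ generate $\mathbb{Z}/2$ acting on $S=\{u,v\}$ by swapping the two points, and take $H=K=\langle\sigma\rangle$, $U=\{u\}$, $V=\{v\}$: the hypotheses hold, yet $\langle H,K\rangle\cong\mathbb{Z}/2$ is not $H*K\cong\mathbb{Z}/2 * \mathbb{Z}/2$. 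Your observation that the conjugation step is exactly where this hypothesis is consumed, and that the paper's applications are safe because one factor is always the infinite cyclic group $\langle g\rangle$ (see the proofs of Theorem \ref{MainTheorem} and Theorem \ref{Products}), is precisely the right way to reconcile your proof with the paper's usage.
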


\section{Irreducible complexes}
\label{IrreducibleCase}

In all that follows, we will assume that $X$ is a  finite dimensional, irreducible, essential, non-Euclidean \cat cube complex, and that $G$ is a group acting on $X$  essentially, properly, and without global a fixed point at infinity. We also assume that $G$ has no finite normal subgroup. 
 
\begin{theorem}[Main Theorem]
 Let $A_1,\ldots, A_n$ be a collection of simultaneously inessential subgroups of $G$. Then there exists $g\in G$ of infinite order, such that for each $i$, 
$$ \langle g, A_i\rangle \cong \langle g\rangle * A_i $$
\label{MainTheorem}
\end{theorem}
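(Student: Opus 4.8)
The plan is to reduce the statement to a single application of the Ping-Pong Lemma (Lemma \ref{PingPong}) for the $G$-action on the ultrafilter space $\mathcal U(X)$, once we have produced a suitable ``good'' halfspace. The crucial input is the assertion (the analogue of Proposition \ref{GoodHyperplane}, to be established using the strongly separated boundary $S(X)$) that, because $A_1,\dots,A_n$ are simultaneously inessential, there is a \emph{single} halfspace $\h$ with $a\h\subset\h^*$ for every nontrivial $a\in\bigcup_i A_i$. In terms of halfspace neighborhoods this reads $a\,U_\h\subset U_{\h^*}$: if $\h\in\alpha$ then $a\h\in a\alpha$, and $a\h\subset\h^*$ forces $\h^*\in a\alpha$ by consistency. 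This is the step I expect to be the main obstacle, and it is exactly where irreducibility, the non-Euclidean hypothesis and the absence of a global fixed point at infinity get used; everything after it is formal.

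Granting such an $\h$, I would build the ping-pong dynamics entirely inside $\h$. First apply the Facing Triples Lemma (Lemma \ref{FacingTriples}) to $\h$ to obtain two disjoint halfspaces $\k,\m\subset\h$. Since $\k\cap\m=\emptyset$ we have the nesting $\k\subset\m^*$, so the Double Skewering Lemma (Lemma \ref{DoubleSkewering}) yields $g\in G$ with $g\m^*\subset\k$. Taking complements gives $g^{-1}\k^*\subset\m$, and since $\k\subset\m^*$ we also get $g\k\subset g\m^*\subset\k$; thus $g\k\subsetneq\k$, no nonzero power of $g$ fixes $\k$, and $g$ has infinite order. A short induction, using $g\k\subset\k$ and $g^{-1}\m\subset\m$, upgrades these to $g^{n}\m^*\subset\k$ and $g^{-n}\k^*\subset\m$ for every $n\ge 1$.

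Now set $U=U_\k\cup U_\m$ and $V=U_{\k^*}\cap U_{\m^*}$ in $\mathcal U(X)$; these are disjoint, since a point of $V$ contains both $\k^*$ and $\m^*$ and so lies in neither $U_\k$ nor $U_\m$. I would then verify the two ping-pong inclusions. For nontrivial $a\in A_i$: because $\k,\m\subset\h$ we have $U\subset U_\h$, hence $aU\subset a\,U_\h\subset U_{\h^*}$; and since $\h^*\subset\k^*$ and $\h^*\subset\m^*$, consistency gives $U_{\h^*}\subset U_{\k^*}\cap U_{\m^*}=V$, so $aU\subset V$. For $n\ne 0$ and $\beta\in V$: if $n\ge 1$ then $\m^*\in\beta$ and $g^{n}\m^*\subset\k$ force $\k\in g^{n}\beta$, while if $n\le-1$ then $\k^*\in\beta$ and $g^{n}\k^*\subset\m$ force $\m\in g^{n}\beta$; either way $g^{n}\beta\in U_\k\cup U_\m=U$, so $g^{n}V\subset U$. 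The Ping-Pong Lemma applied with $H=A_i$ and $K=\langle g\rangle$ then gives $\langle A_i,g\rangle\cong A_i*\langle g\rangle$.

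The point that makes this prove the full theorem is that the halfspace $\h$, and therefore $\k$, $\m$ and the element $g$, depend only on the collection $\{A_i\}$ and not on the individual index $i$, so the one infinite-order element $g$ works simultaneously for all $i$. To summarize the logical dependencies: the only nonformal ingredient is the production of the good halfspace $\h$, whose proof I would attack through the action on $S(X)$ — exploiting that the median of three strongly separated ultrafilters is a vertex (Lemma \ref{MedianInSpace}) and that inessentiality confines each $A_i$-orbit to one side of a hyperplane, in order to ``push'' a strongly separated halfspace so deep that every nontrivial $A_i$-translate of it crosses into the complementary side.
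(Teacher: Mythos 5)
Your proposal is correct and follows essentially the same route as the paper: it invokes Proposition \ref{GoodHyperplane} for the good halfspace, then the Facing Triples and Double Skewering Lemmas to produce the infinite-order element $g$, and concludes with the Ping-Pong Lemma. The only (cosmetic) difference is that you play ping-pong with halfspace neighborhoods in ${\mathcal U}(X)$, whereas the paper plays it with subsets of $X$ itself (taking $U=\bigcup_{1\neq a\in A_i}a\k$ and $V=\m\cup g\m^*$); the underlying nesting dynamics is identical.
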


\begin{corollary}
\label{naivecor}
Suppose that a group $G$ is acting on a finite-dimensional irreducible non-Euclidean \cat cube complex $X$. If the action of $G$ on $X$ is essential, proper and has no global fixed point at infinity, and, $G$ has no non-trivial finite normal subgroups then $G$ has property $P_{naive}$. 
\end{corollary}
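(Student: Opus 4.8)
The plan is to derive $P_{naive}$ from the Main Theorem together with the boundary dynamics of Proposition \ref{bound}. By definition it is enough to fix one finite set $F=\{g_1,\dots,g_k\}$ and produce a single infinite-order $y$ splitting off each $g_i$. I would first record that the torsion elements are harmless: a finite family of finite subgroups is simultaneously inessential, since the union $\bigcup_i\langle g_i\rangle(v)$ of their orbits of a fixed vertex is a finite, hence bounded, vertex set, and any hyperplane missing its convex hull (there are such, as only finitely many hyperplanes cross a bounded set while $X$ has infinitely many) places it inside one halfspace. The Main Theorem therefore splits all finite-order $g_i$ at once. The difficulty is the infinite-order $g_i$: properness makes point stabilisers finite, so these elements are hyperbolic and skewer a hyperplane, whence $\langle g_i\rangle$ is \emph{essential} and cannot be fed to the Main Theorem. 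I would instead treat all of $F$ uniformly by a single ping-pong on $S(X)$, of which the Main Theorem is the inessential-subgroup incarnation.

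Concretely, using the Double Skewering Lemma on a strongly separated pair (Lemma \ref{StronglySeparated}), pick a loxodromic $y_0$ whose attracting and repelling ultrafilters $y_0^{\pm}\in S(X)$ are strongly separated, so that $y_0$ has North--South dynamics on $S(X)$. Proposition \ref{bound} supplies what I need to position the endpoints: minimality and strong proximality give a rich supply of such loxodromics, and topological freeness says each $\Fix(g_i^{n})$ has empty interior, so the countable set $\bigcup_{i}\bigcup_{n\neq0}\Fix(g_i^{n})$ is meagre and I may take $y_0$ with $\{y_0^{+},y_0^{-}\}$ disjoint from it. Thus no nonzero power of any $g_i$ fixes $y_0^{\pm}$.

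Finally, set $y=y_0^{N}$ for large $N$, let $U=U^{+}\cup U^{-}$ with $U^{\pm}$ small disjoint neighbourhoods of $y_0^{\pm}$, and $V=S(X)\setminus U$. No nonzero power of any $g_i$ fixes $y_0^{\pm}$, and the infinite-order $g_i$ act on $S(X)$ with convergence dynamics; hence shrinking $U$ makes it disjoint from all of its nontrivial $\langle g_i\rangle$-translates (for a finite-order $g_i$ only finitely many translates occur, and for an infinite-order one all but finitely many are pushed near $g_i^{\pm}$). Thus $g_i^{n}U\subset V$ for every $n\neq0$ and every $i$; and for $N$ large the North--South dynamics of $y$ give $y^{m}V\subset U$ for all $m\neq0$. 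The Ping-Pong Lemma \ref{PingPong} then yields $\langle g_i,y\rangle\cong\langle g_i\rangle*\langle y\rangle$ for all $i$ with one and the same $y$, which is $P_{naive}$. I expect the crux to be exactly these infinite-order elements, and in particular the requirement $g_i^{n}U\subset V$ for \emph{all} $n\neq0$, including $n=\pm1$: as $g_i$ is prescribed one cannot pass to a high power of it to force the escape, so everything hinges on the general position of $y_0^{\pm}$ and on shrinking $U$, which is precisely where the topological freeness in Proposition \ref{bound} is indispensable and what allows a single $y$ to serve the entire list $F$.
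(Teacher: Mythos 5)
Your proposal diverges from the paper at a false premise, and the detour it forces contains a real gap. You assert that an infinite-order $g_i$ "skewers a hyperplane, whence $\langle g_i\rangle$ is \emph{essential} and cannot be fed to the Main Theorem." This is backwards: essentiality requires orbit points arbitrarily deep inside \emph{every} halfspace, and skewering one hyperplane gives nothing of the sort. In fact the opposite holds, and it is exactly what makes Corollary \ref{naivecor} a one-line consequence of Theorem \ref{MainTheorem}: an infinite-order element of a properly acting group has a combinatorial axis, a combinatorial geodesic crosses each hyperplane at most once, so each end of the axis goes arbitrarily deep into at most one member of any family of pairwise disjoint halfspaces; iterating Lemma \ref{FacingTriples} yields a facing family of $2k+1$ halfspaces, one of which is therefore entered to only bounded depth by all the orbits $\langle g_i\rangle(v)$, and a halfspace nested deep inside that one contains $\bigcup_i\langle g_i\rangle(v)$. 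Thus the cyclic subgroups $\langle g_1\rangle,\ldots,\langle g_k\rangle$, of finite \emph{or} infinite order, are simultaneously inessential precisely because $X$ is irreducible and non-Euclidean --- this is the observation the paper records in the proof of Theorem \ref{Products} (``the action of $\langle g_i\rangle$ on each irreducible factor of $X$ is inessential\ldots simply because $\langle g_i\rangle$ is cyclic and each factor is non-euclidean''). The paper's proof of the corollary is then simply to apply Theorem \ref{MainTheorem} with $A_i=\langle g_i\rangle$; the resulting $g$ is the required element $y$.

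The substitute ping-pong on $S(X)$ breaks down at the very step you identify as the crux. You need, for each infinite-order $g_i$, a small $U$ with $g_i^nU\cap U=\emptyset$ for all $n\neq0$, and you justify this by asserting that infinite-order elements act on $S(X)$ with convergence (North--South) dynamics. The paper proves no such statement, and it is false in general: infinite-order elements need not be rank one even when $X$ is irreducible. Take $G$ to be the right-angled Artin group on the path $a$--$b$--$c$--$d$ acting on the universal cover of its Salvetti complex (irreducible, non-Euclidean, and within the corollary's hypotheses), and $g_i=b$. Since $[a,b]=1$, $b$ preserves the hyperplane dual to an $a$-edge and each of its halfspaces $\h,\h^*$; then $U_{\h}\cap S(X)$ and $U_{\h^*}\cap S(X)$ are disjoint, nonempty (every halfspace contains strongly separated ultrafilters, indeed more than one), clopen, $b$-invariant sets, which is incompatible with all but one point of $S(X)$ being attracted to a single fixed point of $b$. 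For such an element your ``shrink $U$'' step has no justification, and the statement you actually need --- a neighbourhood of a non-fixed point disjoint from all of its nontrivial $\langle g_i\rangle$-translates --- is in substance Proposition \ref{GoodHyperplane} for the subgroup $\langle g_i\rangle$, which the paper establishes via the combinatorial projection onto the hull $C_\h$ together with properness, not via dynamics on $S(X)$. (Two further unsupported steps, secondary to this one: North--South dynamics on $S(X)$ for your $y_0$ is also never proved in the paper, and your Baire-category selection must additionally rule out $g_i^ny_0^{\pm}=y_0^{\mp}$, which is not a fixed-point condition.)
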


First of all, we will need the following lemma. 

\begin{lemma}
Suppose that $a\in G$ is nontrivial. Then $\Fix(a)\subset S(X)$ has empty interior. 
\label{EmptyInterior}
\end{lemma}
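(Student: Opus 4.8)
The plan is to show that the fixed set of a nontrivial element $a \in G$ cannot contain any basic open neighborhood in $S(X)$, using Lemma \ref{HalfSpaceNbhds} to reduce to halfspace neighborhoods together with the rigid structure of strongly separated sequences. Suppose for contradiction that $\Fix(a)$ has nonempty interior. Then there is a strongly separated ultrafilter $\alpha \in \Fix(a)$ and, by Lemma \ref{HalfSpaceNbhds}, a halfspace $\h$ with $\alpha \in (U_\h \cap S(X)) \subset \Fix(a)$. In particular every strongly separated ultrafilter containing $\h$ is fixed by $a$. Since strongly separated sequences exist in abundance inside any halfspace (via Lemma \ref{StronglySeparated} and the Facing Triple Lemma \ref{FacingTriples}), the set $U_\h \cap S(X)$ contains many strongly separated ultrafilters, and these are the points I would manipulate.

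The key mechanism I would exploit is Lemma \ref{MedianInSpace}: the median of three distinct strongly separated ultrafilters is an honest vertex of $X$. First I would produce three distinct strongly separated ultrafilters $\alpha, \beta, \gamma$ all lying in $U_\h \cap S(X)$, hence all fixed by $a$; I expect the Facing Triple Lemma to supply three pairwise-divergent strongly separated sequences deep inside $\h$, yielding three distinct limits. Then $v = med(\alpha, \beta, \gamma)$ is a vertex of $X$, and since $a$ fixes each of $\alpha, \beta, \gamma$ and the median is defined purely in terms of the ultrafilters (so it is $G$-equivariant), $a$ must fix the vertex $v$. The aim is to conclude that $a$ fixes so many vertices, or acts so trivially on the halfspaces "below" $\h$, that it must be the identity, contradicting $a \neq 1$.

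To turn a single fixed vertex into a contradiction I would argue that the abundance of fixed strongly separated ultrafilters forces $a$ to fix a whole region of $X$, and then use properness to rule this out. Concretely, by choosing the three ultrafilters' defining sequences to share long common initial nested segments inside $\h$, the medians I can realize range over an unbounded set of vertices, all fixed by $a$; since the action is proper, an element fixing an infinite (unbounded) set of vertices must be trivial. Alternatively, one can directly show $a$ fixes every halfspace $\k \subset \h$ that is strongly separated from $\h$: any such $\k$ lies in every strongly separated ultrafilter extending the sequence through it, so both $\k$ and $a\k$ would have to be read off consistently from fixed ultrafilters, forcing $a\k = \k$, and then double skewering would propagate fixed halfspaces throughout $X$, again contradicting properness.

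I expect the main obstacle to be the passage from "$a$ fixes the three chosen strongly separated ultrafilters" to "$a$ fixes an unbounded set of vertices." The delicate point is engineering the three strongly separated sequences so that (i) they are genuinely distinct, giving distinct limits in $S(X)$, yet (ii) their medians sweep out an unbounded collection of vertices as one varies the sequences within $U_\h \cap S(X)$; controlling the medians requires careful use of the strong separation (which is exactly what guarantees, via Lemma \ref{MedianInSpace}, that the medians land in $X$ rather than on the boundary). Once the unboundedness of $\Fix(a) \cap X^{(0)}$ is secured, properness of the action delivers $a = 1$ and the contradiction.
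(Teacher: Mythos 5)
Your first two steps coincide with the paper's: reduce via Lemma \ref{HalfSpaceNbhds} to a halfspace neighborhood $U_\h \cap S(X) \subset \Fix(a)$, then use Lemma \ref{MedianInSpace} and equivariance of the median to produce a vertex of $X$ fixed by $a$. But your concluding step has a genuine gap: it is false that, for a proper action, ``an element fixing an infinite (unbounded) set of vertices must be trivial.'' Properness is not freeness; it only makes vertex stabilizers finite, so what you actually obtain is that $a$ has finite order. A torsion element can fix an unbounded set of vertices under a proper action --- indeed it can act trivially on all of $X$. Concretely, if $G_0$ satisfies all the hypotheses and you let $G = G_0 \times \Z/2\Z$ act through the projection to $G_0$, the action is still proper, essential, and without a global fixed point at infinity, yet the generator of the $\Z/2\Z$ factor fixes every vertex and all of $S(X)$. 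This shows that the hypothesis that $G$ has no finite normal subgroup --- which your argument never invokes --- is indispensable, so no proof omitting it can be correct. Your alternative route fails for the same reason (``$a$ fixes every halfspace below $\h$'' contradicts faithfulness, not properness), and its propagation step is also unjustified: $a\k = \k$ does not give $a(g\k) = g\k$; that would require controlling the conjugate $g^{-1}ag$ rather than $a$ itself.

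This is precisely where the paper's proof does extra work. After producing the fixed vertex $v$ as a median, it conjugates: choosing $g$ skewering $\h$ with $\hh$ and $g\hh$ strongly separated, the elements $a_n = g^{-n}ag^n$ all fix $v$, since $v$ is the median of three points of $U_{\h_n}$ (where $\h_n = g^{-n}\h$) and $a_n$ fixes $U_{\h_n}$ pointwise. By properness these $a_n$ lie in the finite group $\Stab(v)$, so some value $b$ recurs for infinitely many $n$, giving $\Fix(b) \supset \bigcup_n U_{\h_n}$. Only now does the no-finite-normal-subgroup hypothesis enter: the kernel of the $G$-action on $S(X)$ is a finite normal subgroup, hence trivial, so $\Fix(b) \neq S(X)$; since $\Fix(b)$ is closed, its complement contains some halfspace neighborhood $U_\k$, and the hyperplane $\hk$ would then have to meet every $\h_n^*$, contradicting the strong separation of the sequence $\{\h_n^*\}$. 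Your median construction is the right start, but without the conjugation-and-pigeonhole step (or some substitute that genuinely uses the absence of finite normal subgroups) the argument cannot close.
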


\begin{proof}
 Suppose that $a$ is non-trivial and fixes an open subset $U\subset S(X)$. By Lemma \ref{HalfSpaceNbhds},
 there exists a half space $\h$ such that the halfspace neighborhood $U_\h\subset S(X)$.  
 Consider three strongly separated ultrafilters in $U_\h$ and let $v$ denote their median. By Lemma \ref{MedianInSpace}, the ultrafilter $v$ is a vertex in $X$.
 Since the action is essential, there exists $g\in G$ such that $g$ skewers $\h$,  so that $g\h\subset \h$. 
By the Lemmas  \ref{StronglySeparated} and  \ref{DoubleSkewering}, we may further assume that $g\hh$ and $\hh$ are strongly separated. 

 We now consider the elements $a_n \equiv g^{-n} a g^n$. Let $\h_n = g^{-n}\h$. Note that by our choice of $g$ above, the sequence $\{\h_n^*\}$ is a strongly separated sequence of halfspaces. 

 Note that $a_n$ fixes $U_n\equiv~U_{\h_n}$.  Since $v\in\h\subset g^{-n}\h$ it follows that $v$ is the median of three points contained in $U_n$, and therefore $a_n v= v$. By the properness of the action, there are only finitely many possibilities for $a_n$, so that we may pass to a subsequence of $\{a_n\}$ such that $a_n=b$ for all $n$. We then have $\bigcup_n U_n\subset \Fix(b)=\{y\in S(X) \vert by=y\}$. Because the action is proper, the kernel of the action on $S(X)$ is a finite normal subgroup and because  $G$ has no finite normal subgroup, we  have $Fix(b)\not= S(X)$. But now $\Fix(b)$ is closed. So there exists a halfspace $\k$ such that $U_\k\subset S(X)-\Fix(b)$. Consequently, we  have that $\hk\cap \h_n^*$, for all $n$. But this is a contradiction, since
 $\{\h_n^*\}$ is a strongly separated sequence of halfspaces. 
 \end{proof}

The key to proving the main theorem is the following proposition, which will allow us to play ping-pong. 

\begin{proposition}
 Let $A_1,\ldots A_n$ be a collection of simultaneously inessential subgroups of $G$. Then there exists a halfspace $\k$ in $X$, such that 
 $a\k\subset\k^*$ for all non-trivial $a\in \bigcup_i A_i$. 
 \label{GoodHyperplane}
\end{proposition}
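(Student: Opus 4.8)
The plan is to translate the conclusion into a disjointness statement and then to place $\k$ deep inside the complementary halfspace $\h^*$, using strong separation to reduce everything to the combinatorics of a single vertex. Since a halfspace and its complement partition the vertices, $a\k\subset\k^*$ is the same as $a\k\cap\k=\emptyset$, so I am looking for a halfspace $\k$ that is disjoint from all of its translates $a\k$, $a\in\bigcup_i A_i\setminus\{1\}$. By simultaneous inessentiality fix $\h$ and a vertex $v$ with $\bigcup_i A_i(v)\subset\h$, and for each $i$ set $C_i=\Conv(A_i(v))\subset\h$. Each $C_i$ is convex and, since $A_i$ permutes $A_i(v)$, is preserved setwise by every $a\in A_i$.

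First I would choose the candidate halfspace. Using Lemma \ref{StronglySeparated} inside $\h^*$, pick $\k\subset\h^*$ with $\hk$ and $\hh$ strongly separated. The point of this choice is that every hyperplane crossing $\hk$ must then lie entirely in $\h^*$, and so cannot separate two vertices of $C_i\subset\h$; hence no hyperplane crosses both $\hk$ and $C_i$, i.e. $\k$ is strongly separated from each $C_i$. Consequently the nearest-point projection (gate) of $\k$ onto $C_i$ is a single vertex $p_i\in C_i$, and the same vertex is the gate of every deeper halfspace contained in $\k$. Now take any nontrivial $a\in A_i$. By equivariance of the gate and $aC_i=C_i$, the gate of $a\k$ onto $C_i$ is $ap_i$. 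If $ap_i\ne p_i$, then a hyperplane $\hat l$ dual to an edge of $C_i$ separates $p_i$ from $ap_i$; since $\k$ and $a\k$ are strongly separated from $C_i$ they lie wholly on the $p_i$-side and the $ap_i$-side of $\hat l$ respectively, whence $a\k\cap\k=\emptyset$. The only elements not yet controlled are those with $ap_i=p_i$, and these lie in $\Stab_G(p_i)$, which is finite by properness.

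It remains to defeat the finitely many survivors $a\in\bigcup_i(\Stab_G(p_i)\cap A_i)\setminus\{1\}$, and this is where the boundary enters and where I expect the real work to be. Each such $a$ fixes its gate vertex, so the projection argument says nothing; instead I would fix the direction of $\k$ by a strongly separated ultrafilter. Extend $\k$ to a strongly separated sequence $\k=\k_1\supset\k_2\supset\cdots$, each term strongly separated from $\hh$, converging (Lemma \ref{filter}) to a strongly separated ultrafilter $\xi\in U_\k\cap S(X)$. Since each $\Fix(a)$ has empty interior in $S(X)$ by Lemma \ref{EmptyInterior}, the finite union of the survivors' fixed sets is closed with empty interior, so $\xi$ may be chosen off it; thus $a\xi\ne\xi$ for every survivor $a$. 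For distinct strongly separated ultrafilters $\xi$ and $a\xi$ there is a halfspace $\m\in\xi$ with $\m\notin a\xi$, and comparing the defining sequences of $\xi$ and $a\xi$ gives, for $N$ large, $\k_N\subset\m$ while $a\k_N\subset\m^*$, so $a\k_N\cap\k_N=\emptyset$. As there are only finitely many survivors, one sufficiently deep $\k_N$ clears them all while retaining the single-vertex gating of the previous step; this $\k_N$ is the required halfspace. The main obstacle is precisely this simultaneous control of the survivors: properness bounds the orbit only locally, so the finiteness of $\Stab_G(p_i)$ together with the topological freeness of Lemma \ref{EmptyInterior} is exactly what reconciles the confinement of the orbit to $\h$ with the need to move every nontrivial element off a fixed deep halfspace.
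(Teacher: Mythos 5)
Your proposal is correct and follows essentially the same route as the paper's proof: an invariant convex set containing the orbit (you use $\Conv(A_i(v))$, the paper intersects carriers $C(a\h)$), a halfspace $\k\subset\h^*$ strongly separated from $\hh$, the single-vertex gate plus equivariance and properness to reduce to a finite set of ``survivors'' fixing the gate vertex, and finally Lemma \ref{EmptyInterior} on $S(X)$ to pass to a deeper halfspace that displaces those finitely many elements. The only differences are cosmetic: the paper finishes by separating neighborhoods in $S(X)$ and extracting a halfspace neighborhood $U_{\k_2}$, whereas you extract the deep halfspace $\k_N$ directly from the strongly separated sequence defining your chosen ultrafilter $\xi$ --- both steps rest on the same density and empty-interior facts.
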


\begin{proof}
To avoid writing indices, we will first give the proof for the case of a single subgroup $A$ and then later explain how this is done for finitely many subgroups. 

We will construct a combinatorial convex hull for $A(v)$, where $v$ is some vertex of $X$. For a halfspace $\h$, let $C(\h)$ denote the carrier of $\h$, namely the union of cubes that intersect $\h$ non-trivially. It is easy to see that that $C(\h)$ is a convex subcomplex of $X$ (see \cite{Haglund2008}). Now, given a halfspace $\h$ such that $A(v)\subset\h$, we define \[C_\h=\bigcap_{a\in A} C(a\h).\]

The inessentiality assumption tells us that there exists such an $\h$, and since $C_\h$ is the intersection of convex subcomplexes, it is convex. Also, $C_\h$ is invariant under $A$. 

\noindent{\bf Remark.} It is convex in both the usual CAT(0) sense but also in the $\ell_1$ sense: every combinatorial edge-geodesic between vertices in $C_\h$ remains in $C_\h$. 

Choose some halfspace $\k_1\subset\h^*$ such that $\hh$ and $\hk_1$ are strongly separated.

We observe that every hyperplane which intersects $C_\h$ does not intersect $\hk_1$, since $C_\h\subset\h$ and $\hh$ and $\hk_1$ are strongly separated.

\begin{figure}[h]
\includegraphics[width=.90\textwidth]{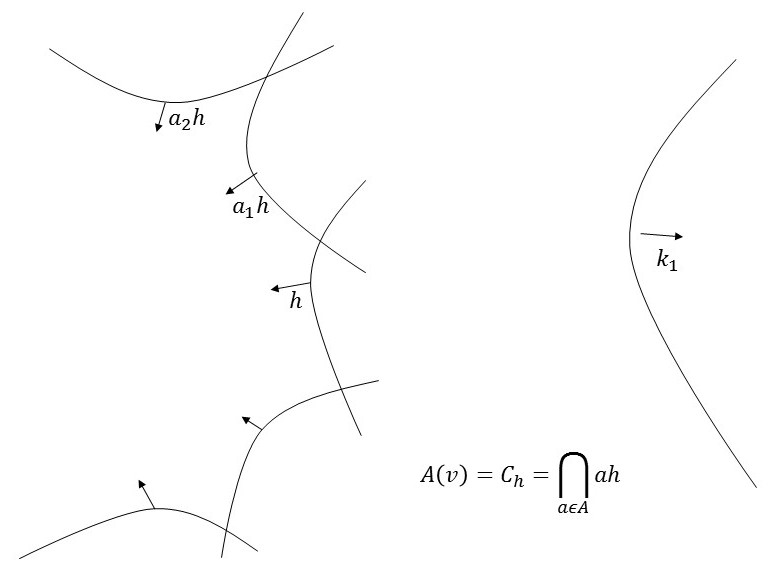}
 \caption{A convex hull for $A(v)$.}
 \label{Fig:Hull1}
\end{figure}

Now we consider the natural combinatorial projection of $\hk_1$ onto $C_h$. Namely, consider all the hyperplanes intersecting $C_\h$. As observed, for every such hyperplane $\hm$, we have $\hk_1\subset \m$ or $\hk_1\subset\m^*$. This thus defines an ultrafilter on the collection of hyperplanes meeting $C_\h$, since it is a choice of halfspaces which satisfies the standard consistency conditions necessary for an ultrafilter. It also satisfies the DCC condition (see, for example,  \cite{Sageev2014}). Thus, it determines a vertex $w$ in $C_\h$. This is the unique vertex of $C_\h$  that can be joined by a path to $\hk_1$ without crossing any hyperplane that meets $C_\h$.

Note that for any $a\in A$, $a\hk_1$ does not intersect any hyperplane that intersects $C(\h)$. This is because if it did, say $a\hk_1\cap\hm\not=\emptyset$, then by applying $a^{-1}$, we find that $\hk_1\cap a^{-1}(\hm)\not=\emptyset$. But by invariance of $C(\h)$ under $A$, we have that $a^{-1}(\hm)\cap C(\h)\not=\emptyset$, contradicting the strong separation of $\hk_1$ and $\hh$. 

Thus $a\hk_1$ projects to a vertex in $C(\h)$, just as $\hk_1$ does. 
Now by the naturality of this construction, we have that for each $a\in A$, the  translate $a\hk_1$ projects to $aw$. But if $a\hk_1\cap \hk_1\not=\emptyset$ it must project to $w$ as well. 

\begin{figure}[h]
\includegraphics[width=.90\textwidth]{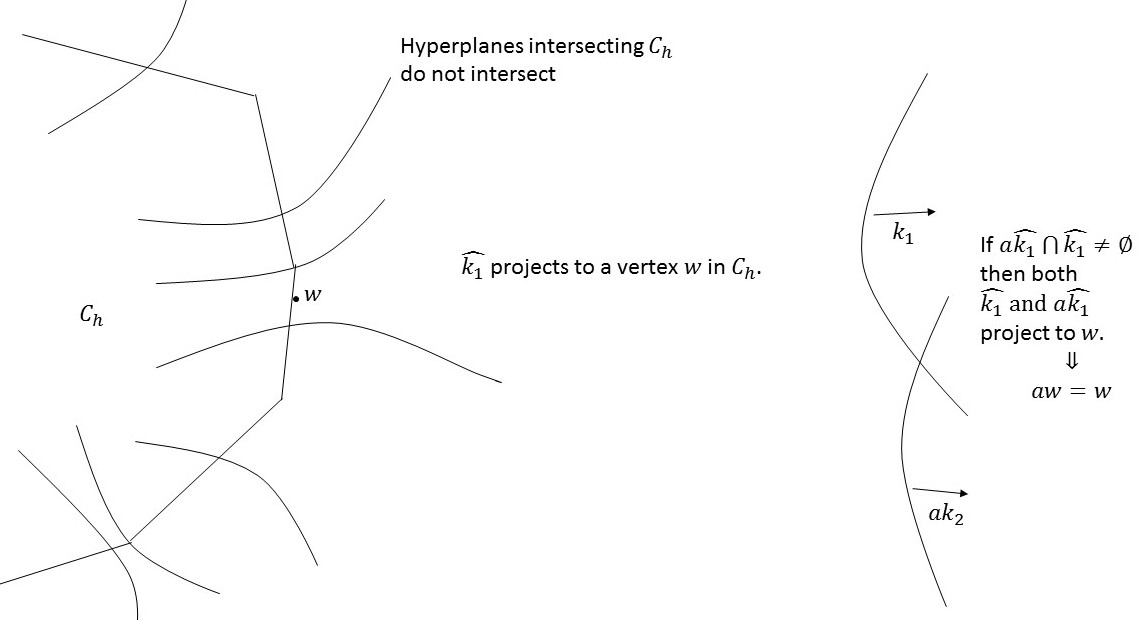}
 \caption{The projection of $\hk_1$ onto $C_\h$.}
 \label{Fig:Hull2}
\end{figure}

This tells us that 

$$S = \{a\in A\vert a\hk_1\cap\hk_1\not=\emptyset\}\subset \Stab(w)$$

By the properness of the action, we get that $S$ is finite. For all elements $a\not\in S$, we have $a\k\subset\k^*$, as required. We are thus left to prove the proposition for the elements of $S$. 

Let $U_{\k_1}$ denote the open subset of $S(X)$ determined by $\k_1$. By Lemma \ref{EmptyInterior} we can find a point $b\in U_{\hk_1}$ which is not fixed by any element of $S$. Since $S$ is finite, there exists a neighborhood $U\subset U_{\k_1}$ of $b$ such that $U\cap aU=\emptyset$ for any $a\in S$. Since every open neighborhood contains a halfspace neighborhood, we have a halfspace $\k_2\subset\k_1$ such that $aU_{\k_2}\cap U_{\k_2} = \emptyset$ for all $a\in S$. Thus, for any $a\in S$, we have that 
$a\k_2\subset \k_2^*$ for any $a\in S$. Since this is already true for $\k_1$ for all other elements of $A$, the hyperplane $\hk_2$ is the desired hyperplane.

To show the proposition for the case of finitely many subgroups $A_1,\ldots,A_n$ which are simultaneously inessential, we start with a hyperplane $\h$ such that $\bigcup _i A_i(v)\subset\h$. Taking $\hk_1$ as above
we see that the set of elements $S$  of $\bigcup_i A_i$ which carry $\hk_1$ to a hyperplane meeting $\hk_1$ is finite. 
We then construct, as in the previous paragraph a halfspace $\k_2\subset\k_1$ such that $a\k_2\subset\k_2^*$ for any element of $S$. This $\hk_2$ is the desired hyperplane. 
\end{proof}

\begin{proof}[Proof of Theorem \ref{MainTheorem}]
By Proposition \ref{GoodHyperplane}, there exists a halfspace $\k$ such that $a\k\subset\k^*$ for all $a\in \bigcup_i A_i$.
We need to find our $g\in G$ which plays ping-pong with every $A_i$. 

By the Facing Triples Lemma, there exists a pair of disjoint halfspaces $\m$ and $\n$ with $\m\cup \n\subset \k$.
By the Double Skewering Lemma, there exists $g\in G$ such that $g\m^*\subset\n$.

We now construct two disjoint subsets $U$ and $V$ of $X$, such that $aU\subset V$ for all non-trivial $a\in A_i$ and $g^n V\subset U$ for all $n\not=0$ and for all $i$.  This will then give the result by the Ping Pong Lemma. For each $i$, we define 

$$U=\bigcup_{a\not=1\in A_i} a\k \text{\ \ and\ \ } V = \m\cup g\m^*$$

\begin{figure}[h]
\includegraphics[width=.90\textwidth]{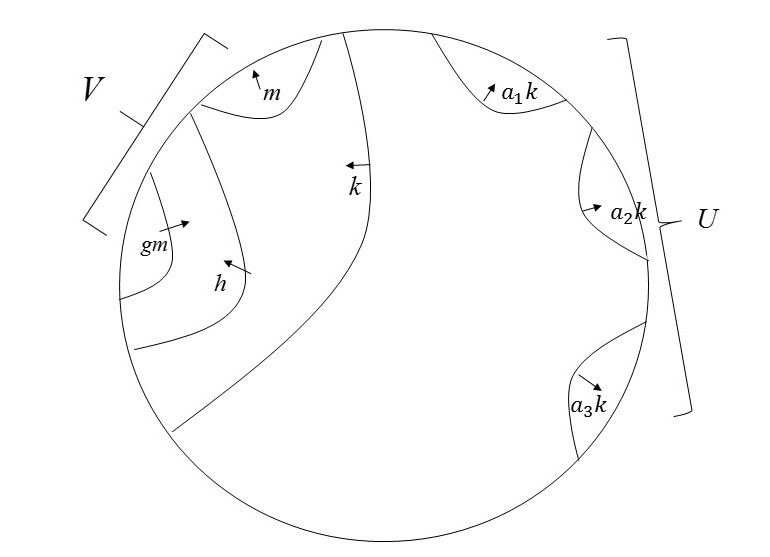}
 \caption{The construction of the ping pong pair.}
 \label{Fig:MainArgument}
\end{figure}

Note that by construction, for each $a\not=1\in A_i$, we have $a\k\subset U$, so we have $aV\subset U$. For each $n\not=0$, we obtain 
$g^n(g\m)\subset\m$ or $g^n\m^*\subset g\m^*$. Since $U\subset\m^*$ and $U\subset g\m$, we have that $g^n U\subset V$, as required. 
\end{proof}

We now prove Proposition \ref{QuasiconvexInessential}.

\begin{proof}
We prove the proposition by induction on $n$. Let $v$ be a vertex of $X$. In  \cite{SageevWise2015} (Proposition 3.3), it is shown that if $H$ is a quasiconvex subgroup of $G$, there exists a number $C>0$ and a universal number $D>0$ (depending only on the dimension of the complex), such that if $w$ is a vertex with $d(w, H(v))>C$, then there exists a hyperplane $\hh$ separating $w$ and $H(v)$ and $d(w,\hh)<D$. 

 Since points arbitrarily far away from $H(v)$ are guaranteed to exist when $H$ is of infinite index, this implies that the the proposition in the case $n=1$. 

We know assume that $G_1,\ldots,G_{n-1}$ are simultaneously inessential. Let $\h$ be a halfspace such that $G_i(v)\subset \h^*$ for $i=1,\ldots,n-1$. Note that since $v\subset \h^*$, the orbit $G_n(v)$ is not entirely contained in $\h$. We consider a halfspace $\k\subset\h$ such that $\hk$ is strongly separated from $\hh$. Since $G_n$ is of infinite index, there exists a vertex $w\in\k$ such that $d(w, G_n(v))>C$. We also choose $w$ such that $d(w,\hk)>D$. Now we apply the above again and conclude that there exists a hyperplane $\hm$ separating $w$ and $G_n(v)$ and such that $d(w,\hm)<D$. Let $\m$ be the halfspace associated to $\hm$ such that $w\in\m$ and $G_n(v)\subset\m^*$. Since $d(w,\hm)<D$, we have that $\hm\cap\k\not=\emptyset$. Since $\hk$ and $\hh$ are strongly separated, we thus have that $\hm\cap\hh=\emptyset$. Moreover, since $G_n(v)\cap\h^*\not=\emptyset$, we must have $\m\subset\h$ and not $\m^*\subset\h$. It follows
that $G_i(v)\subset\h$ for all $1=1,\ldots,n$, as required. 
\end{proof}

We complete the section with a proof of Proposition \ref{bound}, which says that $S(X)$ is a $G$-boundary, on which, if conditions are favourable, $G$ acts topologically freely. 

\begin{proof}[Proof of Proposition \ref{bound}]
Let $X$ be a non-Euclidean irreducible CAT(0) cube complex $X$. Suppose $G$ is acting essentially on $X$ without a global fixed point at infinity. Then, we claim that $S(X)$ is a $G$-boundary. We first show that the compact $G$-space $S(X)$ is minimal: given $\alpha \in S(X)$ and $U \subset S(X)$ open, there exists $g\in G$ such that $g\alpha \in U$. By Lemma \ref{HalfSpaceNbhds}, there exists some halfspace $\h$ such that $(U_\h\cap S(X))\subset U$. If $\h \in \alpha$ then we can take $g=1$. Suppose then $\h \notin \alpha$. By the Flipping Lemma \cite{CapraceSageev2011}, there exists $g \in G$ such that $g\h^* \subset \h$ and for this $g$, $\h \in g\alpha$. This implies $g\alpha \in (U_\h\cap S(X))\subset U$. 

We now show that the $S(X)$ is proximal: for any pair $\alpha, \beta \in S(X)$ of points, there exists a point $\gamma \in S(X)$ such that for every open neighbourhood $U$ of $\gamma$ there exists $g \in G$ such that $g\alpha, g\beta \in U$. Choose a strongly separated ultrafilter $\gamma$ which is distinct from both $\alpha$ and $\beta$. Let $U$ be any open set containing $\gamma$. Note that $S(X)$ is Hausdorff and so we can find an open set $V$ that contains $\gamma$ but does not contain $\alpha$ and $\beta$. The open set $U\cap V$ contains $\gamma$ and by Lemma \ref{HalfSpaceNbhds}, contains a half-space neighbourhood $U_\h$. Now, $\h^* \in \alpha, \beta $, so we use the Flipping Lemma to find $g \in G$ such that $g\h^* \subset \h$. Then, $\h \in g\alpha, g\beta$ and therefore $g\alpha, g\beta \in U$. 

To ensure that the proximal minimal $G$ space $S(X)$ is strongly proximal, we need to check that $S(X)$ has contractible neighbourhoods. Let $\alpha$ be a strongly separated ultrafilter and let $\h$ be a halfspace contained in $\alpha$. We claim that the open neighbourhood $V:=U_\h \cap S(X)$ of $\alpha$ is contractible i.e. there exists $\beta \in S(X)$ such that every open neighbourhood of $\beta$ contains a translate of $V$. Choose $\beta$ to be any strongly separated ultrafilter distinct from $\alpha$ and let $U$ be an open set containing $\beta$. As before, choose a halfspace $\k$ such that $\k \in \beta$, $\k \subset \h^*$ and $U_\k \cap S(X) \subset U$. Use the Flipping Lemma to choose $g\in G$ such that $g \k^* \subset \k$. Then, $gV \subset U$. 

This shows that $S(X)$ is a minimal and strongly proximal compact $G$-space. Lemma \ref{EmptyInterior} verifies that the action is topologically free whenever the action of $G$ on $X$ is proper and $G$ has no non-trivial finite normal subgroups. 
\end{proof}


\section{Property $P_{naive}$ and $C^*$-simplicity}\label{naive}
Recall that a group $G$ has property $P_{naive}$ if for every finite subset $F \subset G$ there exists an element $y \in G$ of infinite order such that given $g \in F$, the subgroup $\langle g, y \rangle$ is isomorphic to the free product $\langle g\rangle * \langle y\rangle$.  

The simplest example of a group possessing property  $P_{naive}$ is a non-abelian free group $F_n$. Property $P_{naive}$ was introduced by Bekka, Cowling and de la Harpe as part of their programme to study simplicity of group $C^*$-algebras \cite{BCH}. Non-elementary hyperbolic groups have property $P_{naive}$; this was proved for torsion-free groups by de la Harpe, and further generalized to relatively hyperbolic groups in \cite{ArzhantsevaMinasyan}. In \cite{BCH}, the authors established $P_{naive}$ for Zariski dense subgroups of connected simple Lie groups with $\mathbb{R}$-rank 1 and trivial center. More recently, property $P_{naive}$ was studied by Tal Poznansky in the context of linear groups: he proved that every Zariski-dense subgroup of a semisimple algebraic group (over any field), satisfies a weak version of property $P_{naive}$ \cite[Lemma 2.3]{Poznansky}. 

Here, we study conditions under which groups acting on \cat cube complexes have property $P_{naive}$. When the underlying complex is irreducible, property $P_{naive}$  follows from the Main Theorem and is recorded as Corollary \ref{naivecor} above. 

\subsection{Products}

In the case of products, we prove a result in a more restricted setting, namely that of lattices in $\Aut(X)$, where $X$ is a locally finite, cocompact cube complex. 

\begin{theorem}\label{Products}
Let $X$ be a locally finite, cocompact \cat cube complex with no Euclidean factors; let $G$ be a lattice in $\Aut(X)$ with no non-trivial finite normal subgroup. Then $G$ satisfies $P_{naive}$.
\end{theorem}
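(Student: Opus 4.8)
The plan is to deduce $P_{naive}$ for $G$ from a ping-pong argument on the product boundary $Y = S(X)$. Writing $X = X_1 \times \cdots \times X_n$ for the decomposition into (non-Euclidean) irreducible factors, recall that $Y = \prod_i S(X_i)$, and that every halfspace of $X$ is a halfspace of a single factor, so each halfspace neighborhood $U_\h \cap Y$ constrains only one coordinate. First I would record that the action of $G$ on $Y$ is a \emph{topologically free boundary action}: a lattice in $\Aut(X)$ acts properly on the locally finite complex $X$ (vertex stabilizers, being intersections of the discrete group $G$ with compact open subgroups of $\Aut(X)$, are finite), so the product version of Proposition \ref{bound} applies, and the hypothesis that $G$ has no non-trivial finite normal subgroup yields topological freeness.

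Given a finite set $F = \{g_1,\ldots,g_k\}$ of nontrivial elements, I would next manufacture a single \emph{regular} element $g$. Choose in each factor $X_i$ a strongly separated pair $\h_i \subset \k_i$ (Lemma \ref{StronglySeparated}); by the Simultaneous Double Skewering theorem (Theorem \ref{SimultaneousDoubleSkewering}) --- and this is the one place the lattice hypothesis is indispensable, since for reducible $X$ we have no essential-action substitute for it --- there is $g \in G$ with $g\k_i \subset \h_i$ for every $i$. Such a $g$ has infinite order, preserves each factor, and has clean one-step dynamics on halfspace neighborhoods in each factor: from $g\k_i \subset \h_i$ one gets $g^l(U_{\k_i}) \subset U_{\h_i}$ and $g^{-l}(U_{\h_i^*}) \subset U_{\k_i^*}$ for all $l \geq 1$. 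Hence, setting $V_i = U_{\k_i} \cap U_{\h_i^*}$ (the ``slab'' between the two hyperplanes) and $U_i = U_{\h_i} \cup U_{\k_i^*}$, which are disjoint, one obtains $g^l(V_i \cap Y) \subset U_i \cap Y$ for \emph{every} $l \neq 0$ at once --- exactly one of the two ping-pong inclusions.

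I would then treat each $g_j$ separately, since the Ping-Pong Lemma (Lemma \ref{PingPong}) is applied to the pair $\langle g_j\rangle, \langle g\rangle$, so that the opposing set may depend on $j$ while $g$ stays fixed. The task is to position the pairs $\h_i\subset\k_i$ --- equivalently the attracting/repelling data of $g$ --- so that in a factor $i=i(j)$ adapted to $g_j$ the complementary inclusion $g_j^m(U_{i}\cap Y)\subset V_{i}\cap Y$ holds for all $m\neq 0$; unwinding, this asks that $g_j^m\h_i \subset \k_i\cap\h_i^*$ and $g_j^m\k_i^* \subset \k_i\cap\h_i^*$ for all $m\neq 0$, i.e.\ that the halfspaces defining $g$ be placed transverse to the $g_j$-dynamics, disjoint from and swept across by the whole cyclic orbit of $g_j$. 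Using topological freeness (the product version of Lemma \ref{EmptyInterior}) together with Baire's theorem, I would choose the points $g^\pm \in Y$ off the meager set $\bigcup_j\bigcup_{m\neq 0}\Fix(g_j^m)$ and off the finitely many $g_j$-limit points; then, as $F$ is finite, shrink the halfspaces so that $U_i$ is wandering under $g_j$ with cyclic orbit accumulating only at the $g_j$-fixed points, which sit in the interior of the slab $V_i$. Applying Lemma \ref{PingPong} to each pair yields $\langle g_j,g\rangle \cong \langle g_j\rangle * \langle g\rangle$.

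The hard part, and the reason the product case is genuinely more delicate than Corollary \ref{naivecor}, is the \emph{simultaneity and uniformity}: one $g$ must play ping-pong against every $g_j$, including those that are hyperbolic --- even ``regular'' --- in the factors, so that the one-sided inessentiality mechanism of Proposition \ref{GoodHyperplane} does not apply, and those that permute the factors. The combinatorial core is the claim that a strongly separated pair $\h_i\subset\k_i$ can be positioned in each factor so that the finitely many relevant $g_j$ are all swept through the associated slab; on a tree factor this is the classical statement that two hyperbolic isometries with disjoint endpoint pairs admit four pairwise-disjoint one-step Schottky half-trees, and in general it is precisely here that strong separation substitutes for the tree combinatorics. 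A factor-permuting $g_j$ I would handle by working with a $\langle g_j\rangle$-orbit of factors in place of a single factor --- building $g$ inside the finite-index subgroup of $G$ that stabilizes each factor, while still reading the ping-pong inclusions off the genuine $G$-action on $Y$.
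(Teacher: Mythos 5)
Your proposal breaks down at precisely the step you yourself flag as ``the hard part,'' and the root cause is a misdiagnosis that makes you reject the tool that solves the problem. You assert that for elements $g_j$ which are hyperbolic (even regular) in the factors, ``the one-sided inessentiality mechanism of Proposition \ref{GoodHyperplane} does not apply.'' This is the opposite of the paper's key observation: \emph{every} cyclic subgroup $\langle g_j\rangle$ acts inessentially on \emph{every} non-Euclidean irreducible factor $X_k$, whether $g_j$ is elliptic or hyperbolic there. Indeed, the orbit of a vertex under $\langle g_j\rangle$ lies in a neighborhood of a point or of a geodesic line, and a geodesic line can enter at most two members of a facing triple (Lemma \ref{FacingTriples}), so some halfspace of the triple contains the whole orbit. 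Moreover, properness of $\langle g_j\rangle$ on $X$ forces properness of $\langle g_j\rangle$ on at least one factor (for torsion $g_j$ this is vacuous). Hence Proposition \ref{GoodHyperplane}, applied in such a factor $X_k$, hands you a halfspace $\h_k$ with $a\h_k\subset\h_k^*$ for all nontrivial $a\in\langle g_j\rangle$ --- exactly the complementary ping-pong inclusion you are struggling to manufacture, valid uniformly for torsion, elliptic-in-factor, and hyperbolic-in-factor $g_j$ alike. The paper then chooses disjoint $\m_k,\n_k\subset\h_k$, applies Theorem \ref{SimultaneousDoubleSkewering} once to get a single $g$ with $g\m_k^*\subset\n_k$ in all factors, and plays ping-pong with subsets of $X$ itself, namely $U=\bigcup_{1\neq a\in\langle g_j\rangle}a\h_k$ and $V=\m_k\cup\n_k$; no boundary dynamics are needed at all.

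Because you discard this mechanism, your replacement argument has to carry the full weight of the theorem, and it does not. The Baire-category selection of points off $\bigcup_j\bigcup_{m\neq 0}\Fix(g_j^m)$ is legitimate (fixed sets are closed with empty interior), but everything after it presupposes convergence-type dynamics for each $g_j$ on $S(X_{i(j)})$: ``the finitely many $g_j$-limit points,'' an orbit of $U_i$ ``accumulating only at the $g_j$-fixed points, which sit in the interior of the slab.'' No such north--south behavior is established in the paper, and none exists in general: $g_j$ may have finite order, may act elliptically on the factor you chose, or may be hyperbolic there without being contracting, and in all of these cases it has no attracting/repelling pair. The inclusion you actually need, $g_j^m(U_i\cap Y)\subset V_i\cap Y$ for \emph{all} $m\neq 0$ simultaneously, is a Schottky condition on the entire cyclic orbit; the tree statement you invoke assumes both isometries are hyperbolic with disjoint endpoint pairs (exactly the hypothesis that can fail), and saying that ``strong separation substitutes for the tree combinatorics'' names the desired conclusion rather than proving it. So the proposal, as written, proves the theorem only for those $g_j$ admitting a factor on which they act with genuine rank-one-type dynamics, and leaves the general case --- which the paper settles with the one-line inessentiality observation plus Proposition \ref{GoodHyperplane} --- open.
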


\begin{proof}
Let $X=\prod_k X_k$ be the decomposition of $X$ into irreducible factors. Let $g_1,\ldots, g_n$ denote a finite collection of elements of $G$. 

We first observe that for each $i$, the action of $<g_i>$ on each irreducible factor of $X$ is inessential. This is simply because $<g_i>$ is cyclic and each factor is non-euclidean. 

Secondly, we observe that since the action of $<g_i>$ is proper, there exists a factor of $X$ on which the action of $<g_i>$ is proper. Otherwise, for each factor $X_k$ there exists an integer $n_k$ such that $g^{n_k}$ fixes the ball of radius $R$ in $X_k$. Taking $N=\prod_k n_k$, we obtain an $N$ such that $<g_i^N>$ fixes the ball of radius $R$ in each $X_k$, which in the case that $g_i$ is infinite cyclic, would contradict the properness of the action of $<g_i>$ on $X$. 

For each factor $X_k$ for which $<g_i>$ acts properly on $X_k$,  Proposition \ref{GoodHyperplane} insures that there exists a halfspace $\h_k$, such that $a\h_k\subset \h_k^*$ for all 
$a\in <g_i>$. (If for some $k$, there are no such $g_i$'s, we choose $\h_k$ arbitrarily.) Following the proof of Theorem \ref{MainTheorem}, for each such $k$, we then choose halfspaces $\m_k$ and $\n_k$, so that $\m_k\cup \n_k\subset \h_k$. 

Now we apply Theorem \ref{SimultaneousDoubleSkewering} to conclude that there exists $g\in G$ such that $g\m_k^*\subset\n_k$ simultaneously for all $k$. The construction now of $U$ and $V$ for the application of the Ping Pong Lemma proceeds as in the proof of Theorem \ref{MainTheorem}.

More precisely, we need to show that $<g,g_i> \equiv <g> * <g_i>$. Given such an $i$, Let $X_k$ denote an irreducible component on which $<g_i>$ acts properly. Then set 

$$U=\bigcup_{a\not=1\in <g_i>} a\h_k \text{\ \ and\ \ } V = \m_k\cup\n_k$$

Then we obtain $aV\subset U$ for any $a\not=1$ and we have $g^nU\subset V$ for any $n\not=0$, as required. 
\end{proof}

\subsection{Infinite conjugacy classes}
Corollary \ref{naivecor} and Theorem \ref{Products} allow us to determine necessary and sufficient conditions for a \cat cube complex group to be $C^*$-simple. $C^*$-simple groups are often \textit{icc}: a group is icc if the conjugacy class of every non-identity element is infinite. We will first identify the collection of \cat cubical groups which are icc. 

\begin{proposition}\label{icc} 
If a group $G$ acts properly and co-compactly on a \cat cube complex then $G$ is icc if and only if no finite index subgroup of $G$ contains a non-trivial virtually abelian normal subgroup. 
\end{proposition}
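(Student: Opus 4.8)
The plan is to reformulate the icc condition in terms of centralizers: a nontrivial $g$ has finite conjugacy class if and only if its centralizer $C_G(g)$ has finite index, so $G$ is icc exactly when every nontrivial element has infinite-index centralizer. I would then prove the biconditional as two contrapositives. The first direction, ``if $G$ is not icc then some finite-index subgroup carries a nontrivial virtually abelian normal subgroup,'' is elementary and uses no geometry: given a nontrivial $g$ with finite conjugacy class, the subgroup $H := C_G(g)$ has finite index in $G$ and $g$ lies in its center, so $N := \langle g\rangle$ is a nontrivial cyclic -- hence virtually abelian -- subgroup that is central, and therefore normal, in the finite-index subgroup $H$. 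This immediately exhibits the required configuration, so the content of the proposition lies entirely in the converse.

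For the converse, suppose $N \trianglelefteq H$ with $[G:H]<\infty$ and $N\neq 1$ virtually abelian, and let me produce a nontrivial element with finite $G$-conjugacy class. First I would note that $N$ is finitely generated: being virtually abelian it is amenable (indeed solvable), and since the proper cocompact action bounds the orders of finite subgroups, the Tits Alternative for cube-complex groups \cite{SageevWise} forces $N$ to be finitely generated virtually abelian. If $N$ is finite, any nontrivial $n\in N$ has $H$-conjugacy class contained in $N$, hence finite, so $C_G(n)\supseteq C_H(n)$ has finite index and $G$ is not icc. If $N$ is infinite I would pass to a characteristic finite-index subgroup $A\cong\mathbb{Z}^d$ with $d\geq 1$; being characteristic in the normal subgroup $N$, $A$ is itself normal in $H$, so $H$ acts on it by conjugation through a homomorphism $\phi\colon H\to \Aut(A)\cong GL_d(\mathbb{Z})$.

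The crux -- and the only place the \cat hypothesis enters -- is to show $\phi(H)$ is finite. If not, then since $GL_d(\mathbb{Z})$ is virtually torsion-free (so an infinite subgroup cannot be all torsion), some $h\in H$ has $\phi(h)$ of infinite order. Then $L:=\langle A,h\rangle$ has the abelian normal subgroup $A$ with cyclic quotient $L/A$, hence is metabelian, and by the Solvable Subgroup Theorem for groups acting properly and cocompactly on \cat spaces, $L$ must be virtually abelian. But a virtually abelian group acts on a normal free abelian subgroup with finite conjugation image -- a finite-index abelian subgroup centralizes a finite-index subgroup of $A$ and hence acts trivially on $A\otimes\mathbb{Q}$ -- forcing $\phi(h)$ to have finite order, a contradiction. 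Therefore $\phi(H)$ is finite, $C_H(A)=\ker\phi$ has finite index in $G$, and any nontrivial $a\in A$ is central in $C_H(A)$ and so has finite $G$-conjugacy class, proving $G$ is not icc. I expect this last step to be the main obstacle, since it is exactly here that nonpositive curvature is indispensable: without it there are icc groups such as $\mathbb{Z}^2\rtimes_A\mathbb{Z}$ for $A$ Anosov, or the integer Heisenberg group, which possess normal $\mathbb{Z}^2$'s with infinite-order conjugation action, and it is precisely the Solvable Subgroup Theorem that excludes these in the cube-complex setting.
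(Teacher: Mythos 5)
Your proof is correct, and it takes a genuinely different route from the paper's in both directions. For ``not icc $\Rightarrow$ virtually abelian normal subgroup,'' you use the purely algebraic observation that $\langle g\rangle$ is central, hence normal, in the finite-index centralizer $C_G(g)$; the paper instead analyzes the FC-center (the subgroup of all elements with finite conjugacy class), shows its finitely generated subgroups are virtually abelian by a B.~H.~Neumann-style centralizer argument, and then invokes the geometry --- virtually abelian subgroups stabilize flats, and flats in $X$ have bounded dimension --- to conclude that the FC-center is itself virtually abelian. The paper's route is longer but proves more: it produces a virtually abelian subgroup that is normal (indeed characteristic) in $G$ itself, whereas your shortcut only produces one inside a finite-index subgroup; the statement as phrased needs only the latter, so your simplification is legitimate and eliminates all geometry from this direction. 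For the converse, both proofs dispose of the finite case the same way and then pass to a characteristic free abelian $A\cong\mathbb{Z}^d$ of finite index in $N$, reducing to showing that the conjugation map $\phi\colon H\to\Aut(A)\cong GL_d(\mathbb{Z})$ has finite image. Here the paper asserts finiteness in one line, claiming the image lies in $O(n)\cap GL(n,\mathbb{Z})$ (implicitly via the Flat Torus Theorem: the normalizer of $K'$ acts on the associated flat by isometries normalizing the translation lattice). You instead deduce it from the Solvable Subgroup Theorem applied to $\langle A,h\rangle$ together with virtual torsion-freeness of $GL_d(\mathbb{Z})$, which is equally valid; your write-up is in fact more careful than the paper's at two points, since you justify that $N$ is finitely generated (via the Sageev--Wise Tits Alternative --- the paper silently assumes this when extracting its characteristic subgroup $K'$) and you spell out why an infinite-order element of the image is impossible. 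In short: the paper's argument leans on the flat geometry at both ends, while yours concentrates all the nonpositive curvature input into a single application of the Solvable Subgroup Theorem, at the cost of proving only the literal statement rather than the stronger ``characteristic in $G$'' version.
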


\begin{proof}[Proof of Proposition] Suppose that $G$ is not icc. Let $H$ be the collection of all elements $g \in G$ such that the conjugacy class of $g$ is finite. It is easy to check that $H$ is a characteristic subgroup of $G$. Let $L$ be a subgroup generated by finitely many elements $x_1, \ldots, x_k$ of $H$. For each $i$, the centralizer of $x_i$ in $L$ is a subgroup of finite index in $L$. Consequently, the centre of $L$, which is the intersection of the centralizers of the $x_i$'s has finite index in $L$. This implies that each finitely generated subgroup of $H$ is virtually abelian. As every virtually abelian subgroup must stabilize a flat and the dimension of flats in $X$ is bounded, $H$ is forced to be virtually abelian. This shows, if $G$ has a non-trivial finite conjugacy class, then $G$ contains a non-trivial virtually abelian normal subgroup.  

Suppose now that a finite index subgroup $\Gamma$ of $G$ contains a virtually abelian normal subgroup $K$. If $K$ is finite and $g$ is a non-trivial element of $K$, then the conjugacy class $\{xgx^{-1}\ |\ x \in G\}$ of $g$ is contained in $\cup_{t \in G/\Gamma} tKt^{-1}$. Evidently, every conjugacy class of $K$ is finite and so, $G$ cannot be icc. If $K$ is infinite, then replace $K$ by a characteristic subgroup $K'$ which is free abelian of finite rank. The action of $\Gamma$ on $K$ by conjugation fixes $K'$ and so, $\Gamma$ normalizes $K'$. The homomorphism from $\Gamma$ to $Aut(K')\cong GL(n,\mathbb{Z})$ has finite image (in fact, it lies inside $O(n)\cap GL(n,\mathbb{Z}))$ and so, a finite index subgroup of $\Gamma$ (and hence, of $G$) that centralizes $K'$. Clearly, the conjugacy class of every element of $K'$ in $G$ is finite and $G$ cannot be icc.  
\end{proof}

The amenable radical of a group $G$, written $A_G$ is the largest amenable normal subgroup of $G$. As amenability is closed under extensions, the amenable radical exists and is easily shown to be a characteristic subgroup of $G$. Suppose a group $G$ has a finite index subgroup that contains a normal virtually abelian subgroup $K$. Then, passing to a normal finite index subgroup, we can assume that $G$ has a normal subgroup $H$ of finite index such that $A_H \neq 1$. As $A_H$ is characteristic in $H$, it is normal in $G$ and it follows, $A_G \neq 1$. Hence, the triviality of $A_G$ implies that $G$ has no finite index subgroups containing normal virtually abelian subgroups. 

In groups acting geometrically on \cat cube complexes the converse is true: the amenable radical is trivial if no finite index subgroup of $G$ has normal virtually abelian subgroups $\neq 1$. This is because, \cat cubical groups satisfy the Tits Alternative \cite[Main Theorem]{SageevWise} and the amenable radical is virtually abelian. Therefore $A_G \neq 1$ implies $G$ has a normal virtually abelian subgroup. To summarise, we have the following equivalence. 

\begin{lemma} \label{radical}
Suppose that a group $G$ is acting properly on a \cat cube complex and $G$ has a bound on the size of its finite subgroups. Then the amenable radical $A_G$ is trivial iff $G$ has no finite index subgroups with normal non-trivial virtually abelian subgroups. 
\end{lemma}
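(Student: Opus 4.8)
The plan is to prove the two implications separately, each by contraposition, assembling the two facts established in the paragraphs immediately preceding the statement. Throughout, write $A_\Gamma$ for the amenable radical of a group $\Gamma$, and recall that $A_\Gamma$ is characteristic in $\Gamma$ and that virtually abelian groups (in particular, finite groups) are amenable.

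For sufficiency — that the radical condition forces $A_G=1$ — I would argue the contrapositive. Suppose $A_G\neq 1$. Since $G$ acts properly on a finite dimensional \cat cube complex and the bound on the order of finite subgroups makes every locally finite subgroup finite, the Tits Alternative of \cite{SageevWise} applies and shows that the amenable subgroup $A_G$ is (finitely generated) virtually abelian. As $A_G$ is characteristic in $G$, it is a non-trivial normal virtually abelian subgroup of $G$ itself, and $G$ is a finite index subgroup of $G$. This exhibits exactly the forbidden configuration, completing this direction.

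For necessity — that $A_G=1$ forces the radical condition — I would again prove the contrapositive: if some finite index subgroup $\Gamma\le G$ contains a non-trivial virtually abelian normal subgroup $K$, then $A_G\neq 1$. Since $K$ is amenable, it suffices to produce a non-trivial amenable normal subgroup of $G$, and I would split according to whether $K$ is infinite or finite. If $K$ is infinite, let $H$ be the normal core of $\Gamma$, a finite index normal subgroup of $G$ contained in $\Gamma$. Then $K\cap H$ has finite index in $K$, hence is infinite, is amenable, and is normal in $H$ (as $K\trianglelefteq\Gamma\supseteq H$), so $A_H\neq 1$; being characteristic in $H\trianglelefteq G$, $A_H$ is a non-trivial normal amenable subgroup of $G$, whence $A_G\supseteq A_H\neq 1$. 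This is precisely the passage ``to a normal finite index subgroup with $A_H\neq 1$'' indicated before the lemma.

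The main obstacle is the finite case, where $K\cap H$ may be trivial and this naive descent fails. Here I would instead pass to the normal closure. Since $\Gamma$ normalizes $K$, the normalizer $N_G(K)$ has finite index, so $K$ has only finitely many conjugates $K_1,\dots,K_m$, and $N:=\langle K_1,\dots,K_m\rangle$ is normal in $G$. Each $K_i$ is finite, so each centralizer $C_G(K_i)$ has finite index (because $N_G(K_i)/C_G(K_i)$ embeds in the finite group $\Aut(K_i)$); hence $C:=\bigcap_i C_G(K_i)$ has finite index and centralizes $N$. Consequently the conjugation map $G\to\Aut(N)$ kills $C$ and factors through the finite quotient $G/C$; restricting to $N$ shows $\mathrm{Inn}(N)\cong N/Z(N)$ is finite. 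Thus the abelian subgroup $Z(N)$ has finite index in $N$, so $N$ is virtually abelian and therefore amenable. As $N\trianglelefteq G$ and $K\le N$, we get $1\neq K\le N\subseteq A_G$, so $A_G\neq 1$. Morally, the two halves of the necessity direction together amount to the commensurability invariance $A_\Gamma=A_G\cap\Gamma$ of the amenable radical; I expect verifying this finite case cleanly to be the only genuinely delicate point, the remaining assertions being the characteristic-subgroup bookkeeping already sketched in the surrounding text.
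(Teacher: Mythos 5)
Your proof is correct, and its overall skeleton coincides with the paper's: the direction ``no such subgroups $\Rightarrow A_G=1$'' is, in both, the Sageev--Wise Tits Alternative \cite{SageevWise} applied to the amenable (hence $F_2$-free, hence virtually abelian) characteristic subgroup $A_G$, and the direction ``$A_G=1\Rightarrow$ no such subgroups'' is, in both, a contrapositive descent producing a non-trivial amenable normal subgroup of $G$ from a non-trivial virtually abelian $K$ normal in a finite-index $\Gamma$. Where you genuinely go beyond the paper is the finite-$K$ case. The paper's entire justification of this direction is the sentence ``passing to a normal finite index subgroup, we can assume that $G$ has a normal subgroup $H$ of finite index such that $A_H\neq 1$,'' which is exactly your normal-core step and, as you observe, is only immediate when $K$ is infinite: when $K$ is finite, $K\cap H$ can be trivial, so the assertion needs a separate argument. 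Your patch --- $K$ has finitely many $G$-conjugates $K_1,\dots,K_m$ because $N_G(K)\supseteq\Gamma$; each $C_G(K_i)$ has finite index because $N_G(K_i)/C_G(K_i)$ embeds in the finite group $\Aut(K_i)$; hence the normal closure $N=\langle K_1,\dots,K_m\rangle$ meets the finite-index subgroup $C=\bigcap_i C_G(K_i)$ inside its center, so $N$ is center-by-finite, virtually abelian, amenable, normal in $G$, and non-trivial --- is a correct Dietzmann-type argument and fills precisely the step the paper leaves implicit (note that the paper does carry out the analogous finite-case detour in the proof of Proposition \ref{icc}, via the conjugates $tKt^{-1}$, but not here). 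So: same route, but your version is complete where the paper's is a sketch, and the only thing the paper's terser version buys is brevity.
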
 

The presence of virtually abelian subgroups inside finite index subgroups of $G$ is directly related to the existence of Euclidean factors in the Cartan decomposition of the underlying space. 

\begin{lemma} Suppose a group $G$ is acting geometrically and faithfully on a CAT(0) cube complex $X$. If $X$ has a Euclidean factor, then some finite index subgroup of $G$ contains a non-trivial virtually abelian normal subgroup. In particular, the amenable radical of $G$ is non-trivial. 
\label{euclidean}
\end{lemma}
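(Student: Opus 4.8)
The plan is to exploit the canonical product decomposition together with the very rigid automorphism group of a Euclidean factor. Write the irreducible decomposition $X = X_1 \times \cdots \times X_m$. Since this decomposition is canonical (see \cite{CapraceSageev2011}), $\Aut(X)$ permutes the factors respecting isomorphism type, and a line (Euclidean) factor is never isomorphic to a non-Euclidean one. Grouping all the line factors together, one obtains an $\Aut(X)$-invariant splitting $X = E \times Y$, where $E \cong \mathbb{R}^k$ carries its standard cubulation (with $k \geq 1$ since $X$ has a Euclidean factor) and $Y$ has no Euclidean factor. As $E$ and $Y$ share no common irreducible factor, this forces $\Aut(X) = \Aut(E) \times \Aut(Y)$; I write $\pi_E, \pi_Y$ for the two coordinate projections restricted to $G$.

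The key structural input is that $\Aut(E)$ is virtually abelian: concretely $\Aut(\mathbb{R}^k) \cong \mathbb{Z}^k \rtimes W$, where $\mathbb{Z}^k$ is the group of integer translations and $W$ is the finite hyperoctahedral group of order $2^k k!$. In particular $\mathbb{Z}^k \times \{1\}$ is normal in $\Aut(X)$, so the natural candidate is
$$\Lambda := G \cap (\mathbb{Z}^k \times \{1\}),$$
i.e.\ the elements of $G$ acting as a translation on $E$ and trivially on $Y$ (equivalently, the Clifford translations of $X$ lying in $G$). This $\Lambda$ is automatically free abelian and normal in $G$. Equivalently, one may argue with $\ker \pi_Y$, which is normal in $G$ and contained in $\Aut(E)$, hence virtually abelian and amenable, so that $\ker \pi_Y \leq A_G$.

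The entire content of the lemma is therefore the nontriviality of $\Lambda$ (equivalently $\ker\pi_Y \neq 1$), and this is the step I expect to be the main obstacle. First, projecting a geometric action to a factor preserves cocompactness, so $\pi_E(G) \leq \mathbb{Z}^k \rtimes W$ acts cocompactly on $\mathbb{R}^k$; by Bieberbach's theorem its translation subgroup $T = \pi_E(G) \cap \mathbb{Z}^k$ is a full-rank lattice $\cong \mathbb{Z}^k$, in particular nontrivial. Passing to the finite-index normal subgroup $M = \pi_E^{-1}(T)$ of $G$ gives $\pi_E(M) = T$ abelian. The remaining task is to promote a translation of $E$ to an element of $G$ that acts trivially on $Y$ — precisely the assertion that the Euclidean factor can be split off at the level of the group. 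The clean way to see this is via the Flat Torus Theorem together with the fact that the group $\mathrm{Cliff}(X)$ of Clifford translations is a normal abelian subgroup of $\mathrm{Isom}(X)$ equal to the translations of the Euclidean de Rham factor $E$; the standard consequence for cocompact actions is that a finite-index subgroup $G_0 \leq G$ contains $\mathbb{Z}^k$ as a central, hence normal, free abelian subgroup acting trivially on $Y$, furnishing the required nontrivial virtually abelian normal subgroup inside $G_0$.

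Finally, the ``in particular'' clause is immediate: $\Lambda$ (or the central $\mathbb{Z}^k \leq G_0$) is a nontrivial amenable subgroup normal in a finite-index subgroup of $G$, and the discussion preceding Lemma \ref{radical} shows this forces $A_G \neq 1$. The only genuinely delicate point, and the one I would treat most carefully, is justifying that a nonzero Clifford/translation element actually lives in $G$ rather than merely in $\pi_E(G)$ — that is, controlling the $Y$-coordinates of lifts of translations — which is exactly where properness and cocompactness must be combined through the flat torus / Bieberbach machinery.
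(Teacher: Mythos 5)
Your skeleton (group the line-like factors, split the group virtually along $X = E \times Y$, observe the $E$-part is virtually abelian and normal) is the same as the paper's, which writes $X \cong X_P \times X_E$ and cites Corollary~2.8 of Nevo--Sageev for the virtual splitting $H = H_E \times H_P$. The gap is in the one step you yourself flag as the crux, and it comes from conflating two different meanings of ``Euclidean.'' In this paper a Euclidean factor is an irreducible essential factor that is \emph{quasi-isometric} to a real line (see the Preliminaries); it need not be the standard cubulation of $\mathbb{R}$, and need not be \emph{isometric} to a line at all. A concrete example is the infinite staircase (the union of unit squares along a diagonal staircase path in $\mathbb{R}^2$): it is CAT(0), irreducible, essential, quasi-isometric to $\mathbb{R}$, and carries a geometric $\mathbb{Z}$-action, but it is not a metric product with an $\mathbb{R}$-factor --- its corner vertices have links of length $\pi/2$, so no geodesic line passes through them, whereas in $\mathbb{R}\times Z$ every point lies on a line. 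Consequently the cubical translation of the staircase is \emph{not} a Clifford translation ($d(x,\gamma x)$ is strictly larger at outer corners than on the axis), the staircase has trivial Euclidean de~Rham factor, and for $X = (\text{staircase}) \times Y$ one gets $\mathrm{Cliff}(X) = 1$ even though $X$ has a Euclidean factor in the paper's sense. So the machinery you lean on for the nontriviality of $\Lambda$ --- Clifford translations, the de~Rham factor, the Flat Torus Theorem (which in any case runs in the wrong direction: it produces flats from a $\mathbb{Z}^k$ in $G$, not a $\mathbb{Z}^k$ in $G$ from the geometry) --- has literally nothing to act on in this case. Your explicit claims $E \cong \mathbb{R}^k$ with $\Aut(E) \cong \mathbb{Z}^k \rtimes W$, and the parenthetical ``equivalently, the Clifford translations of $X$ lying in $G$,'' are symptoms of the same conflation.

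By contrast, the paper's citation of Nevo--Sageev Corollary~2.8 is formulated for the \emph{cubical} decomposition and so covers quasi-line factors: it gives a finite index subgroup $H = H_E \times H_P$ with $H_E$ acting properly and cocompactly on $X_E$; since $X_E$ is quasi-isometric to $\mathbb{R}^k$, the group $H_E$ is virtually abelian and infinite, and being a direct factor it is normal in $H$. (The paper also disposes of the irreducible case first, where $G$ itself is virtually cyclic.) Your argument, as written, proves the lemma only in the special case where every Euclidean factor happens to be the standard cubulated line; and even there, the ``standard consequence for cocompact actions'' you defer to is precisely the nontrivial splitting statement that the paper outsources to Nevo--Sageev, so the proposal would in any event reduce to a citation --- the problem is that the citation you chose does not apply in the generality the lemma requires.
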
 

\begin{proof}
As $G$ is acting geometrically, $X$ is finite dimensional and moreover by passing to an essential core, we may assume that the $G$-action on $X$ is essential. Now, if $X$ is irreducible, then $X$ is Euclidean, meaning, it is quasi-isometric to the real line. In this case $G$ itself is virtually infinite cyclic. If $X$ is reducible, then it has a Cartan decomposition into irreducible factors. We have $X \cong X_P \times X_E$, where $X_E$ is the Euclidean part of $X$. Then, by Corollary 2.8 from \cite{NevoSageev2013}, there is a finite index subgroup $H$ of $G$ such that $H= H_E \times H_P$, where $H_E$ acts properly and co-compactly on $X_E$. This implies that $H_E$ is virtually abelian and so, a finite index subgroup contains a non-trivial virtually abelian normal subgroup. 
\end{proof}

\subsection*{$C^*$ simple groups. }
Let $G$ be a countable discrete group and let $\ell^2 G$ be the Hilbert space of square-summable functions on $G$. The group $G$ acts on $\ell^2G$ via its left regular representation as follows.  $$\lambda_g(f)(h)= f(g^{-1}h),\ \forall g,h\in G.$$ 
The map $g \mapsto \lambda_g$ gives an injection of $G$ into the space of bounded linear operators $\mathcal{B}(\ell^2 G)$. The closure of the linear span of image $\{\lambda_g\ :\ g \in G\}$ in the operator norm is called the reduced $C^*$-algebra of $G$ and written, $C^*_r(G)$. 

A countable group is said to be $C^*$-simple if $C^*_r(G)$ is a simple algebra, i.e. $C^*_r(G)$ has no non-trivial two-sided ideals. The reduced $C^*$-algebra carries information about the representation theory of the group. One can show that simplicity of the algebra $C^*_r(G)$ is equivalent to the following restriction on the representation theory of $G$: every unitary representation of $G$ which is weakly contained in the left regular representation of $G$ is actually equivalent to it. This means that a group which is both amenable and $C^*$-simple must be the trivial group. This statement in turn generalizes to the fact that a $C^*$-simple group cannot have non-trivial normal amenable subgroups.

Many geometric classes of groups have been shown to be $C^*$-simple. These include all free products (except the infinite dihedral group), non-soluble subgroups of $\mathrm{PSL}_2(\mathbb{R})$, torsion-free non-elementary hyperbolic groups and mapping class groups of surfaces. More  generally, acylindrically hyperbolic groups are $C^*$-simple \cite{DGO}.

A group acting geometrically on an irreducible \cat cube complex has enough rank one elements to make it acylindrically hyperbolic, using results from \cite{sisto}. So groups acting geometrically on irreducible \cat cubical groups are $C^*$-simple. However a group acting geometrically on a non-trivial product of irreducibles is not acylindrically hyperpbolic (for example, irreducible lattices in products of trees). Here, we apply our theorems on property $P_{naive}$ to show that even in this setting, a group $G$ acting properly and co-compactly on a \cat cube complex is $C^*$-simple. We summarize this as follows. 

\begin{theorem}[Corollary \ref{equiv}] Suppose that a group $G$ is acting properly and co-compactly on a \cat cube complex $X$. The following are equivalent. 
\begin{enumerate} 
\item $G$ is $C^*$ simple. 
\item $G$ is icc.
\item No finite index subgroup of $G$ has a non-trivial virtually abelian normal subgroup. 
\item the amenable radical of $G$ is trivial. 
\item The $G$-action is faithful and $X$ is non-Euclidean.
\item $G$ has property $P_{naive}$.
\end{enumerate} 
\end{theorem}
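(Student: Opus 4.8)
The plan is to organize the six statements around condition (4), the triviality of the amenable radical, which I would treat as the hub. Two of the equivalences are already available from the preceding results: $(2)\Leftrightarrow(3)$ is precisely Proposition~\ref{icc}, and $(3)\Leftrightarrow(4)$ is Lemma~\ref{radical}, whose standing hypothesis of a bound on the orders of finite subgroups holds automatically here because a proper cocompact action on a \cat cube complex has only finitely many conjugacy classes of finite subgroups. It then remains to thread (1), (5) and (6) through these, which I would accomplish with the cycle $(1)\Rightarrow(4)\Rightarrow(5)\Rightarrow(6)\Rightarrow(1)$.

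The two implications involving the operator algebra are quotations of external facts. For $(6)\Rightarrow(1)$ I would invoke the theorem of Bekka, Cowling and de la Harpe \cite{BCH} that property $P_{naive}$ implies $C^*$-simplicity. For $(1)\Rightarrow(4)$ I would use that a $C^*$-simple group has no non-trivial normal amenable subgroup; since the amenable radical $A_G$ is by construction a normal amenable subgroup, $C^*$-simplicity forces $A_G=1$.

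For $(4)\Rightarrow(5)$ I would argue in two steps. Faithfulness is automatic: the kernel of the action lies inside a point stabiliser, hence is finite by properness, so it is a finite --- therefore amenable --- normal subgroup and is swallowed by $A_G=1$. Non-Euclideanness then follows from Lemma~\ref{euclidean}: were $X$ to carry a Euclidean factor, the (now faithful) geometric action would produce a finite-index subgroup with a non-trivial virtually abelian normal subgroup, contradicting (3), which is equivalent to our hypothesis (4).

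The substance of the argument is $(5)\Rightarrow(6)$, and I expect this to be the main obstacle, since it is where the Main Theorem and its product analogue are actually used and where the full hypothesis lists of Corollary~\ref{naivecor} and Theorem~\ref{Products} must be discharged. Working with the (essential) complex $X$, I would first rule out a non-trivial finite normal subgroup $N$: such an $N$ fixes a point, so $\Fix(N)$ is a non-empty convex subcomplex, and it is $G$-invariant because $g\Fix(N)=\Fix(gNg^{-1})=\Fix(N)$; essentiality of the action then forces any non-empty $G$-invariant convex subcomplex to meet every hyperplane and hence to equal $X$, so $N$ acts trivially and faithfulness gives $N=1$. With faithfulness, essentiality, non-Euclideanness, the absence of a finite normal subgroup, and the absence of a global fixed point at infinity (the latter being a feature of cocompact essential actions on non-Euclidean complexes, by the structure theory of \cite{CapraceSageev2011}) all in hand, I would split into cases. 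If $X$ is irreducible, Corollary~\ref{naivecor} yields $P_{naive}$ directly. If $X$ is reducible, then properness and cocompactness make $X$ locally finite and exhibit $G$ as a uniform lattice in $\Aut(X)$ with no Euclidean factor and no finite normal subgroup, so Theorem~\ref{Products} applies and again delivers $P_{naive}$. The delicate points to watch are exactly the verification of these hypotheses --- principally the no-global-fixed-point condition and the lattice structure in the reducible case --- rather than any single hard estimate.
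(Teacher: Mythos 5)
Your proof is correct and takes essentially the same approach as the paper: the same key ingredients (Proposition \ref{icc} for $(2)\Leftrightarrow(3)$, Lemma \ref{radical} for $(3)\Leftrightarrow(4)$, Lemma \ref{euclidean} for $(4)\Rightarrow(5)$, Corollary \ref{naivecor} and Theorem \ref{Products} for $(5)\Rightarrow(6)$, and the facts from \cite{BCH}) are threaded into the same cycle, the only cosmetic difference being that you close the loop via $(1)\Rightarrow(4)$ where the paper uses $(1)\Rightarrow(2)$, both standard consequences of $C^*$-simplicity. Your verification of the hypotheses needed in $(5)\Rightarrow(6)$ --- ruling out finite normal subgroups, the absence of a global fixed point at infinity, and the lattice structure in the reducible case --- is actually more explicit than the paper's proof, which simply passes to an essential core and cites the two results.
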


\proof The implications (1)$\Rightarrow$(2) and (6)$\Rightarrow$(1) are well-known, see \cite{BCH}. Proposition \ref{icc} establishes the equivalence of (2) and (3). Lemma \ref{radical} shows (3) and (4) are equivalent. That (4) implies (5) follows from Lemma \ref{euclidean}. 

The hypothesis that $G$ acts properly and co-compactly implies that $G$ is finitely presented and moreover, $X$ is finite-dimensional. The kernel of the action is finite whenever the action is proper and so if the amenable radical is trivial, the action is faithful. Now, to deduce (6) from (5), after passing to an essential core if needed, we apply Corollary \ref{naivecor} and Theorem \ref{Products}. 
\endproof

\small{A. Kar, University of Southampton, a.kar@soton.ac.uk \\ 
M. Sageev, Technion - Israel Institute of Technology, sageevmtx.technion.ac.il }

\end{document}